\documentclass[11pt,reqno]{amsart}
\usepackage[T1]{fontenc}
\usepackage[utf8]{inputenc}
\usepackage{lmodern}
\usepackage{amssymb,mathrsfs}
\usepackage{geometry}
\usepackage{microtype}
\usepackage{tikz}
\usepackage{cite}
\usepackage[colorlinks,linkcolor=blue,citecolor=red,urlcolor=blue]{hyperref}
\usepackage{indentfirst}
\theoremstyle{plain}
\newtheorem{theorem}{Theorem}[section]
\newtheorem{proposition}[theorem]{Proposition}
\newtheorem{lemma}[theorem]{Lemma}

\theoremstyle{remark}
\newtheorem{remark}[theorem]{Remark}
\newtheorem{example}[theorem]{Example}
\newcommand{\PP}{\mathbb P}
\newcommand{\EE}{\mathbb E}
\newcommand{\ZZ}{\mathbb Z}

\newcommand{\RR}{\mathbb R}
\newcommand{\FF}{\mathcal F}
\newcommand{\one}{\mathbf 1}
\newcommand{\cE}{\mathcal E}

\title{Recurrence and range of the balanced excited random walk M(2,1,2)}

\author[Shuo Qin]{Shuo Qin}
\address[Shuo Qin]{Beijing Institute of Mathematical Sciences and Applications, and Yau Mathematical Sciences Center, Tsinghua University}
\email{qinshuo@bimsa.cn}

\date{}
\begin{document}
\begin{abstract}
We prove that the planar balanced excited random walk $M(2,1,2)$ is recurrent. This walk takes a horizontal simple random walk step on its first departure from each vertex and a planar simple random walk step on every later departure. Moreover, the number of distinct vertices visited before time $n$, multiplied by $(\log n)/n$, converges to $\pi$ almost surely and in every $L^p$, $1\le p<\infty$, the same limit as for the planar simple random walk. More generally, we prove recurrence of balanced excited random walks in spatially inhomogeneous cookie environments whenever the total positive and negative cookie strengths at each vertex are bounded by constants $A, B$ with $A+B<1+1/(2\pi +1)$.
\end{abstract}
\maketitle

\section{Introduction}

Consider a walk $S=(S_n)_{n\ge0}$ on $\ZZ^2$ started at the origin. On its first
departure from each vertex, it steps to the left or to the right, with
equal probabilities. On every later departure from that vertex,
it chooses uniformly among the four neighboring vertices. This
walk is denoted by $M(2,1,2)$. Write
\[
 R_n=\bigl|\{S_0,\ldots,S_{n-1}\}\bigr|,\qquad n\ge1,
\]
for the number of vertices visited before time $n$.

\begin{theorem}\label{thm:m212}
The walk $M(2,1,2)$ almost surely visits every vertex of $\ZZ^2$
infinitely often. Moreover,
\begin{equation}\label{eq:m212-range}
 \lim_{n\to\infty}\frac{\log n}{n}R_n=\pi
 \quad\text{a.s. and in }L^p,\quad 1\le p<\infty.
\end{equation}
\end{theorem}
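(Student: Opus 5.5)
We view $S$ as a perturbation of planar simple random walk. Write $S_n=M_n+D_n$, where $M$ is a martingale and $D$ is the accumulated drift. The walk is balanced: every step has mean zero, so in fact $S$ itself is a martingale, and $D\equiv0$. The only difference from simple random walk is the conditional covariance of a step. On a first departure from a vertex it is $\mathrm{diag}(1,0)$; on a later departure it is $\tfrac12 I$. Hence the quadratic variation is
\[
\langle S\rangle_n=\tfrac12 nI+\sum_{k<n}\one\{S_k\text{ new}\}\,\mathrm{diag}(\tfrac12,-\tfrac12),
\]
and the correction term is controlled by the range $R_n$. This suggests a bootstrap: first show $R_n=o(n)$ a.s., then use it to control the covariance, then run simple-random-walk-type arguments for the martingale.

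\textbf{Step 1: sublinear range.} We show $R_n\le Cn/\log n$ with high probability. A walk with bounded, uniformly elliptic-in-the-average covariance in two dimensions spends about $\log n$ time in a typical visited site. I would use a Green-function or occupation-time estimate valid for all balanced walks with covariance between $\mathrm{diag}(1,0)$ and $\tfrac12 I$; note that on revisits the covariance is genuinely $\tfrac12 I$. The general recurrence criterion from the abstract, with cookie strengths $A+B<1+1/(2\pi+1)$, is presumably proved by a Lyapunov function $f(x)=\log|x|$ adjusted by the cookie budget. For $M(2,1,2)$ I would verify that it fits that framework. The first-departure step has Laplacian-type defect
\[
\EE[\Delta \log|S|\mid \FF]\approx \frac{y^2-x^2}{|x|^4}\cdot\frac12,
\]
which has indefinite sign and is charged only once per site. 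We need the total such defect along the path to be dominated by the number of revisits, using $\EE\sum_k \one\{\text{new}\}/|S_k|^2\lesssim \EE\sum_k \one\{\text{new}\}$ weighted appropriately. This comparison is the main obstacle. It is where the constant $1/(2\pi+1)$ enters: one compares $\sum 1/|S_k|^2$ over new sites against the $\log$-growth of simple random walk, whose constant is $2\pi$ in the Green function.

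\textbf{Step 2: recurrence.} Once $\log|S_n|$ is shown to be a supermartingale outside a ball, up to a summable correction, optional stopping gives that $S$ returns to a fixed ball a.s. Ellipticity on revisits then gives that every vertex is visited infinitely often, by a Borel--Cantelli/Lévy conditional argument: each return to the ball gives a uniform chance of reaching any fixed vertex within bounded time.

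\textbf{Step 3: the range asymptotics.} Given $R_n=o(n)$, the covariance satisfies $\langle S\rangle_n/n\to\tfrac12 I$, so a martingale invariance principle gives diffusive scaling with the simple-random-walk limit. For the range, follow the Dvoretzky--Erdős/Jain--Pruitt scheme. We have $\EE R_n=\sum_k \PP(S_k\text{ new})$. The probability that the walk does not return to its current site within time $m$ is $\sim \pi/\log m$, since after leaving a new site the walk performs SRW-like steps except on new sites, and those have density $O(1/\log)$ and do not change the local Green function to leading order. This yields $\EE R_n\sim\pi n/\log n$. For the variance, use the standard decorrelation argument on the range of blocks, which gives $\mathrm{Var}(R_n)=O(n^2/\log^4 n)$ or similar. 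That suffices for a.s. convergence along $n_j=\lfloor e^{j^{1/2}}\rfloor$, interpolated by monotonicity. Finally, $L^p$ convergence follows from the a.s. convergence plus uniform integrability, since $R_n\log n/n$ has all moments bounded by the block-subadditivity bound $R_{n}\le\sum R^{(i)}$ and exponential tails.
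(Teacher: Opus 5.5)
Your proposal has genuine gaps at each load-bearing step: each is stated as a heuristic rather than proved.

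\textbf{Range (Steps 1 and 3).} In Step 1 you invoke ``a Green-function or occupation-time estimate valid for all balanced walks with covariance between $\mathrm{diag}(1,0)$ and $\frac12I$.'' No estimate of the needed strength is available. The covariance is chosen by the history, so the walk is not Markov on $\ZZ^2$. The general method for such walks (Angel--Holmes--Ram\'irez) gives only $\EE R_n=O(n/\sqrt{\log\log n})$ here. What must be exploited is that the horizontal kernel is used at most once per site, and you never turn that into an estimate.

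The paper does this with the potential of the consumed measure, $\Phi_n=\sum_z\mu_n(z)a(S_n-z)$. By $(P-I)a=\one_{\{0\}}$, its conditional increment is $\mu_j(S_j)+b_j+b_j(d*\mu_j)(S_j)$. For $M(2,1,2)$ the first two terms sum to exactly $1$. The anisotropic terms sum pathwise to $\frac12Q_n=\frac12\sum_{z,w}\mu_n(z)\mu_n(w)d(z-w)$, which is bounded by $\frac12R_n$ because the Fourier multiplier of $d=Da$ satisfies $|m|\le1$. With the packing bound $\Phi_n\ge\frac1\pi R_n\log R_n-CR_n$ this gives $\EE R_n\le Cn/\log n$. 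Moreover $\Phi_n-n-\frac12Q_n$ is a martingale, so packing and the Azuma bound $\max_{j\le n}|S_j|\le\sqrt n\log n$ give $\Phi_n=n+o(n)$ and $\Phi_n=\frac{R_n}{\pi}\log n+o(n)$. This yields the constant $\pi$ with no local Green-function input.

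Your Step 3 instead needs the non-return probability from the current site to be $(1+o(1))\pi/\log m$ uniformly over histories, and this is asserted, not proved. The Dvoretzky--Erd\H{o}s/Jain--Pruitt scheme also rests on i.i.d.\ increments: time reversal for $\PP(S_k\text{ new})$, and independence of block ranges for the variance. Neither holds here. Even granting $\mathrm{Var}(R_n)=O(n^2/\log^4n)$, Chebyshev along $n_j=e^{\sqrt j}$ gives terms of order $1/j$, which are not summable. Finally, tails from $R_n\le\sum_iR^{(i)}$ require block-range bounds uniform over the past, which is what the conditional part of Proposition \ref{prop:occupation} and Lemma \ref{lem:occupation-moments} supply.

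\textbf{Recurrence (Step 2).} $\log|S_n|$ is not a supermartingale up to a summable correction. Since $(P-I)\log|z|=O(|z|^{-4})$, there is no negative drift to absorb the excitation drift $D\log|z|\approx(y^2-x^2)/(2|z|^4)$. The once-per-site budget only gives $\sum_{|z|<R}(D\log|z|)_+=\log R+O(1)$, the same size as $\log|S_\tau|$ at exit from $B_R$. For total strength $1$ this is exactly borderline and bounds nothing. Knowing that only an $O(1/\log r)$ fraction of steps at scale $r$ are excited does not close the gap either. Turning that into a drift bound requires knowing how long the walk spends at each scale, which is again a Green-function estimate you do not have.

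The missing idea is to use $\psi=\log\log|z|$, whose ordinary drift is strictly negative, $-\frac14w(z)$ with $w(z)=|z|^{-2}(\log|z|)^{-2}$. The excitation drift is then bounded in two complementary ways:
\begin{itemize}
\item globally, by the budget: $\kappa\log\log R+C$;
\item locally, on blocks of at most $\lfloor\epsilon\delta^2|x|^2\rfloor$ steps, by $J\,\EE W$, using $\EE\Xi_m\le(\Lambda+o(1))m/\log m$.
\end{itemize}
Weighting these by $1-c/J$ and $c/J$ respectively cancels the clock $W$. This bounds the escape probability by $(1-c/J)\kappa<1$ uniformly over histories (here $\kappa=1$, $\Lambda=\pi$), and L\'evy's upward theorem then gives recurrence.

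Your account of the constant $1/(2\pi+1)$ is also off. It comes from $\Lambda(A,B)\le2\pi\kappa/(2-\kappa)$ together with the requirement $\kappa(1-1/\Lambda)<1$, and it plays no role for $M(2,1,2)$.
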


The conditional mean of each step is zero. Nevertheless, a first
departure can increase the distance from the origin more effectively than a planar simple random walk step: a horizontal step from the vertical axis always increases that distance. Recurrence therefore
requires control of the cumulative effect of first departures.
The range asymptotic in \eqref{eq:m212-range} shows that their number has the same first-order asymptotic as the range of planar simple random walk \cite{MR0047272}. We give some estimates for the convergence rate in Proposition \ref{prop:range-limit}.

The notation $M(2,1,2)$ comes from the family $M(d,d_1,d_2)$ of balanced excited random walks. Let $e_1,\ldots,e_d$ be the standard coordinate vectors of $\RR^d$. For integers $d\ge2$ and $1\le d_1,d_2\le d$ with $d_1+d_2\ge d$, this walk $M(d,d_1,d_2)$ moves on $\ZZ^d$. On a first departure, it chooses uniformly among $\{\pm e_i:1\le i\le d_1\}$; on a later departure, it chooses uniformly among $\{\pm e_i:d-d_2+1\le i\le d\}$. The two sets of coordinates may thus overlap. The two laws coincide only when $d_1=d_2=d$, in which case the walk is the $d$-dimensional simple random walk.

Benjamini, Kozma and Schapira \cite{MR2788390} introduced the case $d_1+d_2=d$, and Camarena, Panizo and Ram\'irez
\cite{MR4237270} considered overlapping coordinates.
The word \emph{balanced} refers to the zero conditional mean of each step. This differs from the excited random walk of Benjamini and Wilson \cite{MR1987097}, whose first departures have a
nonzero directional drift and which is transient in dimensions at least two.

In the non-overlapping case $d_1+d_2=d$, projections onto $\ZZ^{d_1}$ and $\ZZ^{d_2}$ imply that the walk $M(d,d_1,d_2)$ is transient if $\max\{d_1, d_2\} \geq 3$. Benjamini, Kozma and Schapira proved that $M(4,2,2)$ is transient. They conjectured that $M(3,1,2)$ and $M(3,2,1)$ are transient. The transience of $M(3,1,2)$ was proved by Peres, Schapira and Sousi \cite{MR3531697}. In two dimensions, Angel, Holmes and Ram\'irez \cite{MR4597323} studied $M(2,1,1)$, and obtained upper and lower bounds for its range. In particular, \cite[Theorem 4.2]{MR4597323} shows that the expected range is $O(n/\sqrt{\log\log n})$. We note that with a suitable timing rule, the same theorem gives this upper bound for the expected range of $M(2,1,2)$. The recurrence of $M(2,1,1)$ remains conjectural. In the overlapping case  $d_1+d_2>d$, Camarena, Panizo and Ram\'irez  \cite{MR4237270} proved that $M(d,d_1,d_2)$ is transient for every $d\ge4$, except for $M(4,3,2)$. To our knowledge, Theorem \ref{thm:m212} gives the first recurrence result for a member of this family, except the simple random walk case.

\subsection{Cookie environments}

Our argument allows the excitation at a vertex to be spread over several visits. It also allows both horizontal and vertical excitation, subject to common bounds on their total strengths. For background on excited random walks and cookie environments, we refer to the survey of Kosygina and Zerner \cite{MR3097419}.

A cookie environment is a family $\eta=(\beta_{z,j}:z\in\ZZ^2,j\ge1)$
with $\beta_{z,j}\in[-1,1]$. Given $\eta$ and a starting point
$x\in\ZZ^2$, let $S=(S_n)_{n\ge0}$ be a nearest-neighbour walk on
$\ZZ^2$ with $S_0=x$, and write $\PP_x^\eta$ for its law. Set
$e_1=(1,0)$, $e_2=(0,1)$, $\FF_n=\sigma(S_0,\ldots,S_n)$, and
\[
 L_n(z)=\sum_{k<n}\one_{\{S_k=z\}},\qquad
 b_n=\beta_{S_n,L_n(S_n)+1}.
\]
Thus $L_n(z)$ counts the departures from $z$ strictly before time
$n$, and $b_n$ is the strength of the cookie used for the departure
from $S_n$ at time $n$. The walk is characterized by the conditional
step law
\begin{equation}\label{eq:cookie-law}
 \PP_x^\eta(S_{n+1}-S_n=e\mid\FF_n)
 =\begin{cases}
   (1+b_n)/4,&e=\pm e_1,\\
   (1-b_n)/4,&e=\pm e_2.
  \end{cases}
\end{equation}
Equivalently, on the $j$-th departure from $z$, each horizontal
direction has probability $(1+\beta_{z,j})/4$ and each vertical
direction has probability $(1-\beta_{z,j})/4$. In particular,
strength $\beta_{z,j}=0$ gives a planar simple random walk step,
strength $1$ gives a horizontal step, and strength $-1$ gives a
vertical step.

For $t\in\RR$, write $t_+=\max(t,0)$ and $t_-=\max(-t,0)$.
For $A,B\ge0$, let $\cE_{A,B}$ denote the environments satisfying
\begin{equation}\label{eq:budgets}
 \sum_{j\ge1}(\beta_{z,j})_+\le A,\qquad
 \sum_{j\ge1}(\beta_{z,j})_-\le B
 \quad\text{for every }z\in\ZZ^2.
\end{equation}

\begin{theorem}\label{thm:main}
Suppose that $A,B\ge0$ and
\[
 A+B<\kappa_*:=1+\frac1{2\pi+1}.
\]
For every
$\eta\in\cE_{A,B}$ and every starting point $x\in\ZZ^2$, the
walk defined by \eqref{eq:cookie-law} almost surely visits every
vertex infinitely often.
\end{theorem}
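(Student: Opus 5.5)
Throughout, the strategy is a Lyapunov-function argument with $f(z)=\log(1+|z|^2)$, or a smoothed version $f(z)=\log|z|$ for $|z|$ large. Under a simple random walk step this function is almost harmonic: $\mathbb E[\Delta f]=O(|z|^{-4})$. A cookie step of strength $b$ at $z=(z_1,z_2)$ changes the conditional drift of $\log|z|$ by
\[
 \frac{b}{4}\,\bigl(\partial_{11}-\partial_{22}\bigr)\log|z|+O\bigl(|z|^{-3}\bigr)
 =\frac{b}{2}\,\frac{z_2^2-z_1^2}{|z|^4}+O\bigl(|z|^{-3}\bigr),
\]
so it has size at most $|b|/(2|z|^2)$. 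The sign of this perturbation is not controlled, so a plain supermartingale argument fails. Instead we compare the drift with the quadratic variation. The conditional variance of the increment of $\log|z|$ is always of order $1/(2|z|^2)$, and in fact equals $\frac{1}{2|z|^2}$ up to lower order, independently of $b$. So each step has quadratic variation $\approx\frac1{2|z|^2}$, while the total excess drift coming from a vertex $z$ is at most $(A+B)/(2|z|^2)$ summed over its cookie uses. The positive and negative parts enter through $(\beta)_+$ and $(\beta)_-$ with possibly opposite sign effects, which is why $A+B$ appears.

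The key comparison is between the number of distinct vertices visited in an annulus and the number of time steps spent there. Write $M_n=\log|S_n|$, with martingale part $N_n$ and drift part $D_n=\sum_{k<n}\delta_k$. Then
\[
 |D_n|\le\sum_{z\in\text{range}}\frac{A+B}{2|z|^2}+\text{error}
 \qquad\text{and}\qquad
 \langle N\rangle_n\approx\sum_{k<n}\frac{1}{2|S_k|^2}.
\]
To get recurrence (returning to a fixed ball from far away before escaping to radius $R$), I would run a scale argument. Starting at radius $r$ with $r_0\ll r\ll R$, the walk exits the annulus $\{r_0<|z|<R\}$, and we bound the probability of exiting outward. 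A Doob/optional stopping estimate for $M$ with drift correction needs $\mathbb E|D_\tau|$ to be at most $\theta$ times something comparable to $\log(R/r_0)$, with $\theta<1$. The mechanism is that range is a fraction of local time: in two dimensions each visited site is visited about $\log$-many times. More precisely, I would prove an estimate of the form
\[
 \mathbb E\sum_{z\in\text{range}}|z|^{-2}\le c\,\mathbb E\sum_k|S_k|^{-2}
\]
with an explicit $c$. Here the constant $2\pi+1$ enters: the Green function of the walk killed near the boundary at scale $|z|$ grows like $\frac{2}{\pi}\log$ for SRW, and it must be controlled uniformly for the cookie walk. This would come from a lower bound on the expected number of returns to $z$ before the walk travels distance $\asymp|z|$ away, obtained through a local recurrence estimate that itself uses the budget bound.

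The alternative, cleaner route I would try first avoids range counting. Use the function $g(z)=\log|z|+\lambda\, h(z)$, where the correction $h$ absorbs the cookie effect via a potential charged at first visits. The process $\log|S_n|-\frac{A+B}{2}\sum_{z\in\text{range}_n}|z|^{-2}$ is a supermartingale in the outward direction. We then need $\frac{A+B}{2}\sum_{z\in\text{range}}|z|^{-2}$ to be dominated by a fraction of $\log$-growth. Using the condition $A+B<1+1/(2\pi+1)$, I expect the bound to take the form: each new site visit contributes at most $\frac{1}{2|z|^2}$ of pushing, while the time spent compensates at rate $\frac{1}{2|z|^2}\cdot(1+\text{return rate})$. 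This uses the fact that the quadratic variation per new site, including the immediate returns of probability roughly $1/(2\pi+1)$-related terms, exceeds $\kappa_*$ times the drift.

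The main obstacle is this quantitative comparison between range and occupation time near each site, valid uniformly over environments in $\mathcal E_{A,B}$ and producing the sharp-looking constant $\kappa_*$. Once the one-scale estimate is in hand (exit outward with probability at most $1-\epsilon$ at each dyadic scale, or a bounded-expectation supermartingale), recurrence of a ball follows. Visits to every vertex infinitely often then follow by a conditional Borel–Cantelli argument, since from any vertex there is a uniformly positive probability of a finite path to any given vertex; each step has probability at least $\min\{(1\pm b)/4\}$, which can be zero only on finitely many cookies because the budget forces $\beta_{z,j}\to0$.
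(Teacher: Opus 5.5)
There is a genuine gap: nothing in your proposal gets past $\kappa=A+B<1$, and the difficulty of the theorem lies in the range $1\le\kappa<\kappa_*$.

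\textbf{Why the $\log|z|$ scale cannot do it.} Since $\log|z|$ is harmonic for the simple walk up to $O(|z|^{-4})$, optional stopping in your scale gives only $\PP_x(\text{exit at }R)\log(R/r_0)\le\log(|x|/r_0)+\EE_xD_\tau+O(1)$. You therefore need an absolute bound $\EE_xD_\tau\le\theta\log R$ with $\theta<1$, and there is no negative drift to charge excess excitation against.

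The natural bound of this form groups the drift by vertex and keeps the sign of $\frac{y^2-x^2}{2|z|^4}$. This gives $A\sum_{|z|<R}(\cdot)_++B\sum_{|z|<R}(\cdot)_-=\kappa\log R+O(1)$, which works exactly when $\kappa<1$; this is the mechanism of Proposition~\ref{prop:bounded-prefix}. Your per-vertex bound $(A+B)/(2|z|^2)$ discards this angular cancellation and loses a factor $\pi$. So your second route gives only $\kappa<1/\pi$ unless you have a priori sparseness of the range.

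Range-versus-time comparisons do not rescue this scale either. The sum $\EE_x\sum_{k<\tau}|S_k|^{-2}$ is essentially the quadratic variation of $\log|S|$. It is of order $\PP\log^2R$ whenever $\PP=\PP_x(\text{exit at }R)$ stays bounded below. A constant $c$ in your inequality then yields $\PP\log R\lesssim\log|x|+c\,\PP\log^2R$, which carries no information. Even the correct $1/\log$ gain (excitation consumed at density about $\Lambda/\log n$) only improves the drift bound to order $\PP\log R\log\log R$, still larger than the left side.

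\textbf{What is missing: a concave change of scale.} The paper uses $\psi=\log\log|z|$, whose ordinary drift $(P-I)\psi\approx-\frac14|z|^{-2}(\log|z|)^{-2}$ is strictly negative. It writes the drift as $(P-I)\psi+bD\psi$ and proves two bounds for $\EE_x\psi(S_\tau)-\psi(x)$:
\begin{itemize}
\item at most $\kappa\log\log R-c\EE_xW_\tau+C$, from the budgets with angular cancellation, \eqref{eq:loglog-budget};
\item at most $(J-c)\EE_xW_\tau+C$ with $J>\Lambda/4$, from concatenated local blocks of duration at most $\epsilon\delta^2|S_{T_j}|^2$, Lemma~\ref{lem:local-drift}.
\end{itemize}
Weighting these by $1-c/J$ and $c/J$ cancels the clock. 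The escape probability is then at most $\kappa(1-c/J)$, which can be made less than one as soon as $\kappa(1-1/\Lambda)<1$.

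\textbf{The missing quantitative input.} What you defer to ``a local recurrence estimate that itself uses the budget bound'' is the uniform estimate \eqref{eq:leading-excitation}: $\EE\Xi_n\le(\Lambda+o(1))n/\log n$ over all environments in $\cE_{A,B}$ and all finite histories. The paper does not prove this through Green-function lower bounds for the cookie walk. It uses the potential $\Phi_n=\sum_z\mu_n(z)a(S_n-z)$ of the consumed excitation, together with:
\begin{itemize}
\item the drift identity \eqref{eq:potential-increment};
\item the Fourier bound $|m|\le1$ for the anisotropic quadratic form;
\item the signed row bounds for cookies of opposite signs;
\item the packing inequality.
\end{itemize}

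\textbf{Smaller errors.} Your guess that $2\pi+1$ comes from immediate-return probabilities is not right. It comes from $\Lambda=\pi(A+B+2AB)/(1-AB)\le2\pi\kappa/(2-\kappa)$, after which $\kappa(1-1/\Lambda)<1$ rearranges to $\kappa<1+1/(2\pi+1)$.

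The conditional variance of the increment of $\log|S|$ is also not independent of $b$. To leading order it is $\frac1{2|z|^2}+\frac{b(x^2-y^2)}{2|z|^4}$, which vanishes for $b=1$ on the vertical axis, precisely where a first departure pushes outward. This is why the paper keeps all $b$-dependence inside $bD\psi$ and uses only the $b$-free negative term $(P-I)\psi$.

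Your final step (neighbour propagation using $\beta_{z,j}\to0$ and conditional Borel--Cantelli) is fine and matches the paper.
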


The recurrence assertion of Theorem \ref{thm:m212} is the case
$\beta_{z,1}=1$ and $\beta_{z,j}=0$ for $j\ge2$. Other examples
include nonnegative stacks with total strength at most one, such
as $\beta_{z,j}=2^{-j}$. Positive and negative cookies may occur
in any order, including infinitely often. Here we treat deterministic environments (stacks). For a random environment independent of the randomness used to choose the steps, the theorem applies conditionally on every realization satisfying \eqref{eq:budgets} with $A+B<\kappa_*$.

The same constants $A,B$ must apply at every vertex. The condition
$\sum_j|\beta_{z,j}|\le1$ separately at each vertex does not imply
common bounds $A,B$ with $A+B<\kappa_*$: a cookie of strength $1$ at
one vertex and a cookie of strength $-1$ at another force $A\ge1$
and $B\ge1$. Such environments are not covered by
Theorem \ref{thm:main}.

We shall use the following quantitative estimate, which holds under
the weaker condition $AB<1$. Define the excitation consumed before time $n$ by
\begin{equation}
    \label{defXingen}
     \Xi_n=\sum_{k<n}|b_k|.
\end{equation}

\begin{proposition}\label{prop:occupation}
For every $A,B\ge0$ with $AB<1$ and every $n\ge1$,
\begin{equation}\label{eq:main-occupation}
 \sup_{\eta\in\cE_{A,B}}\sup_{x\in\ZZ^2}
 \bigl(\EE_x^\eta\Xi_n+\EE_x^\eta R_n\bigr)
 \le C_{A,B}\frac{n}{\log(n+2)},
 \qquad C_{A,B}=C\frac{(1+A)(1+B)}{1-AB},
\end{equation}
where $C$ is universal. The same bound holds conditionally after
any almost surely finite stopping time $T$, with $\Xi_n$ and $R_n$
replaced by $\sum_{j<n}|b_{T+j}|$ and
$|\{S_T,\ldots,S_{T+n-1}\}|$, respectively.
\end{proposition}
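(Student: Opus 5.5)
The plan is to bound $\EE\Xi_n$ and $\EE R_n$ together by showing that a visited vertex is typically revisited about $\log n$ times before time $n$, \emph{including} the excitation it consumes; the constant $(1+A)(1+B)/(1-AB)$ should come from comparing horizontal and vertical excitation at a single vertex. Throughout, write $S=(X,Y)$.

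\textbf{Step 1 (martingales).} From \eqref{eq:cookie-law}, $X$ and $Y$ are martingales. Their conditional variances are $(1+b_k)/2$ and $(1-b_k)/2$, and only one coordinate moves per step. Hence
\[
X_n^2-\sum_{k<n}\tfrac{1+b_k}{2},\qquad Y_n^2-\sum_{k<n}\tfrac{1-b_k}{2}
\]
are martingales, so $\EE|S_n-S_0|^2=n$. This gives diffusive control of $\max_{k\le n}|S_k-S_0|$ via Doob, uniformly in $\eta$. Since the statement is also to hold after a stopping time $T$, I will only use properties valid under every environment in $\cE_{A,B}$. After $T$, the remaining cookie stacks still lie in $\cE_{A,B}$, so the post-$T$ assertion reduces to the case $T=0$ by the strong Markov property for the pair (walk, remaining environment).

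\textbf{Step 2 (returns to a fixed vertex).} Fix a vertex $z$ and a first visit to it at time $\sigma$. I want to show that the expected number of returns to $z$ within the window $[\sigma,\sigma+m]$ is at least $c\log m$, uniformly in the environment. I would use a Lyapunov function built from the planar potential kernel, $f(y)=a(y-z)$. Under \eqref{eq:cookie-law} its one-step drift at $y\neq z$ equals
\[
\tfrac{b}{4}\bigl(\Delta_1 a-\Delta_2 a\bigr)(y-z)=O\!\left(\frac{|b|}{|y-z|^2}\right),
\]
the anisotropic second difference, and at $y=z$ the drift is bounded. Optional stopping at the exit time from a box of radius $r\approx\sqrt m$ then compares $\log r$ with the number of visits to $z$ plus an error. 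The error is $\sum_k |b_k|\,|S_k-z|^{-2}$, with $b_k$ the cookies consumed along the path.

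\textbf{Step 3 (the error term).} This is the main obstacle. Summing the error over the visited vertices $z$ and exchanging the order of summation gives
\[
\sum_k |b_k|\sum_{z\text{ visited}}|S_k-z|^{-2}.
\]
This is roughly $\Xi_n$ times a logarithmic factor, unless the sign structure is exploited.

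What I would try first is to split the cookies by sign. By \eqref{eq:budgets}, positive cookies consumed at a vertex total at most $A$ and negative ones at most $B$. A positive cookie pushes the step toward the horizontal walk and a negative one toward the vertical walk, and these perturb $\Delta_1 a-\Delta_2 a$ with opposite signs. I would therefore replace $a$ by $a+\lambda\,\psi$, where $\psi(y)$ is a bounded anisotropic correction of the form $\frac{x^2-y^2}{|y|^2}$, smoothed near $0$. The parameter $\lambda$ is chosen so that the drift from positive cookies is absorbed by $A$ and that from negative cookies by $B$. Solving for a consistent $\lambda$ should be possible exactly when $AB<1$, with loss factor $(1+A)(1+B)/(1-AB)$. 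The remaining bounded error is charged at $O(1)$ per unit of excitation, giving
\[
\EE\Xi_n\le (A+B)\,\EE R_n+\text{(lower order)}.
\]

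\textbf{Step 4 (conclusion).} Summing Step 2 over the visited vertices gives
\[
n\ \ge\ \EE\sum_{z}L_n(z)\ \gtrsim\ \EE R_n\log n-C_{A,B}\,\EE R_n,
\]
after a standard dyadic-window argument that discards vertices first visited after time $n/2$ using Step 1. This yields $\EE R_n\le C_{A,B}\,n/\log(n+2)$. Combining with $\EE\Xi_n\le(A+B)\,\EE R_n$ from Step 3 gives \eqref{eq:main-occupation}.
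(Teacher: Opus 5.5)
There is a genuine gap, and it is exactly the step you call the main obstacle. The conclusion of Step 3, $\EE\Xi_n\le(A+B)\EE R_n$, costs nothing: it holds pathwise, since each visited vertex carries at most $A+B$ of absolute excitation. What Step 4 actually needs is that the anisotropic error in the return count costs only $O(1)$ per unit of excitation, and nothing in the proposal delivers this.

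Without it, summing Step 2 over the visited vertices amounts to following $V_n=\sum_{z\in\{S_0,\ldots,S_n\}}a(S_n-z)$. Its conditional drift is $1+b_n\sum_{z\in\{S_0,\ldots,S_n\}}d(S_n-z)$. With \eqref{eq:signed-rows} and the packing lemma this gives only
\[
 \frac1\pi\EE[R\log R]-C\EE R
 \le n+\Bigl(\frac1\pi\log(n+2)+C\Bigr)\EE\Xi_n
 \le n+\Bigl(\frac1\pi\log(n+2)+C\Bigr)(A+B)\,\EE R,
 \qquad R=R_{n+1},
\]
which closes only when $A+B<1$. The single-vertex estimate of Step 2 has the same threshold. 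Bounding its error by the budgets, as in Proposition \ref{prop:bounded-prefix}, yields only $\frac2\pi(1-A-B)\log r-C$ expected returns before leaving $B(z,r)$. So neither $M(2,1,2)$, where $A=1$, nor the general case $AB<1$ is reached.

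The corrector cannot supply the missing cancellation. Take $\psi=\cos 2\theta$, with the angle measured from $z$. The drift of $\lambda\psi$ is $\approx-\lambda\cos(2\theta)/r^2$, and it is paid at every step with a weight independent of $b_k$. The term you want to absorb, $b_kd(S_k-z)\approx-b_k\cos(2\theta)/(\pi r^2)$, is proportional to $b_k$, which varies from step to step and vanishes at most steps of $M(2,1,2)$. No constant $\lambda$ matches it. The assertion that a consistent $\lambda$ exists exactly when $AB<1$, with loss $(1+A)(1+B)/(1-AB)$, is read off from the statement rather than derived.

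The missing idea is to put the potential on the consumed excitation rather than on the visited set, and to bound $\Xi_n$ before $R_n$. Set $\mu_n^+(z)=\sum_{j<n}b_j^+\one_{\{S_j=z\}}$ and $\Phi_n^+=\sum_z\mu_n^+(z)a(S_n-z)$. The accumulated anisotropic drift of positive cookies acting on positive mass is then
\[
 \sum_{i<j<n}b_i^+b_j^+d(S_j-S_i)=\frac12\sum_{z,w}\mu_n^+(z)\mu_n^+(w)d(z-w),
\]
a symmetric quadratic form. Since the Fourier multiplier of $d$ satisfies $|m|\le1$, it is at most $\frac12\sum_z\mu_n^+(z)^2\le\frac A2\Xi_n^+$. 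This is the $O(1)$ charge per unit you were after, and it comes from symmetry, not from a corrector. The packing lemma gives $\Phi_n^+\ge\frac1\pi\Xi_n^+\log(\Xi_n^+/A)-C\Xi_n^+$, so with one sign of cookies $\EE\Xi_n=O(n/\log n)$ follows at once.

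Only the cross-sign interactions are not symmetric. By \eqref{eq:signed-rows} they cost $\frac1\pi\log(n+2)+C$ per unit, which leads, for $m_\pm=\EE\Xi_n^\pm$, to the linear system
\[
 (m_+-Am_-)\log(n+2)\le CAn,\qquad (m_--Bm_+)\log(n+2)\le CBn .
\]
That system is where the condition $AB<1$ and the factor $1/(1-AB)$ come from. Once $\EE\Xi_n=O(n/\log n)$ is known, your visited-set argument does close, because the error $(\frac1\pi\log(n+2)+C)\EE\Xi_n$ is then $O(n)$. Your reduction of the post-$T$ statement to $T=0$ through the remaining environment is correct.
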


\begin{remark}
For cookies of one sign, Proposition \ref{prop:occupation} allows
any finite total budget: if $B=0$, the condition $AB<1$ holds for
every finite $A$, and symmetrically when $A=0$.
\end{remark}

For $M(2,1,2)$ started with every vertex unvisited, $\Xi_n=R_n$.
For general stacks, these quantities may differ, and Proposition \ref{prop:occupation}
controls both. Its uniformity allows us to use the estimate in
successive blocks whose lengths depend on the current position.

\subsection{Proof strategy}

We first prove the uniform excitation bound in
Proposition \ref{prop:occupation}. Consider nonnegative cookies
with total strength at most one at each vertex. At time $n$, put
mass at each vertex equal to the excitation already consumed there, and evaluate its potential at $S_n$ using the ordinary potential kernel $a$ of planar simple random walk. Denote the resulting potential by $\Phi_n$: 
\begin{equation}
    \label{defPhin}
 \mu_n(z):=\sum_{j<n}b_j\one_{\{S_j=z\}},\qquad \Phi_n:=\sum_z\mu_n(z)a(S_n-z).
\end{equation}
We prove
\[
 \EE\Phi_n\le Cn,\qquad
 \Phi_n\ge\frac1\pi\Xi_n\log\Xi_n-C\Xi_n.
\]
Figure \ref{fig:consumed-potential} illustrates the construction in the special case of $M(2,1,2)$, where each first departure deposits one unit of mass. For general nonnegative cookies, the mass at a vertex is the total excitation already consumed there. 

The lower bound for $\Phi_n$ follows from a packing inequality \eqref{eq:packing}, which uses the logarithmic growth of $a$
and the fact that each vertex carries at most one unit of mass. For the upper bound,
we sum the conditional drift identity, see \eqref{eq:potential-increment}. Its anisotropic correction becomes
the symmetric quadratic form in \eqref{eq:quadratic-identity},
which can be controlled using Fourier transformation, see Lemma \ref{lem:anisotropic}. 
For cookies of both signs, separate potentials give the two
inequalities in \eqref{eq:two-sign-system}, and they yield the desired bound when $AB<1$. A potential generated by the whole visited set
then gives the range bound. Substituting the resulting excitation bound
back into the packing estimate gives the leading coefficient in
\eqref{eq:leading-excitation}. Section \ref{sec:potential} develops
the potential estimates, and Section \ref{sec:range} proves these
uniform bounds and their moment consequences.

For $M(2,1,2)$, the consumed measure is unit mass on the visited
set. In Section \ref{sec:asymptotic}, the same quadratic form gives
a martingale decomposition, and we show
\[
 \Phi_n=n+o(n),\qquad
 \Phi_n=\frac{R_n}{\pi}\log n+o(n),
\]
both almost surely and in $L^1$. The second comparison uses the
packing lower bound and a maximal inequality confining the path
to a ball of radius $\sqrt n\log n$. These estimates explain the
constant $\pi$ in \eqref{eq:m212-range}. The higher moment bounds
give convergence in every finite $L^p$.

Section \ref{sec:recurrence} proves recurrence using
$\psi(z)=\log\log|z|$ outside a fixed ball. Let $P$ be
the planar simple random walk operator and $D$ the difference
between the horizontal simple random walk operator and $P$.
Writing the conditional drift as $(P-I)\psi+bD\psi$ retains
the negative ordinary drift at every step. The cookie budgets give one bound on
the accumulated excitation drift, and the uniform excitation estimate
on local blocks gives another. Combining these bounds yields a probability
of hitting a fixed ball bounded away from zero after every finite history.
Section \ref{sec:extensions} gives two further examples, allowing
infinite absolute excitation or large budgets on a sparse lattice.

Throughout the paper, $|z|$ denotes Euclidean norm,
$B(x,r)=\{z\in\ZZ^2:|z-x|<r\}$, and $B_r=B(0,r)$. Constants
$c,C>0$ may change from line to line and are universal unless
indicated otherwise. Constants denoted by $C_{A,B}$ may depend
on $A,B$, but not on the environment or the starting point. Further
parameter dependence is specified where needed. We use $0\log0=0$.

\begin{figure}[t]
\centering
\begin{tikzpicture}[x=0.8cm,y=0.8cm,font=\small]
  \draw[step=1,gray!20,very thin] (0,-1) grid (6,4);

  \coordinate (z) at (1,0);
  \coordinate (S) at (5,1);

  \draw[dashed,gray!70] (z) -- (S)
    node[pos=0.6,sloped,below=3pt,
         fill=white,text=black,inner sep=1.5pt]
    {$a(S_n-z)$};

  \foreach \p in {(0,0),(1,0),(2,0),(1,1),(2,1),(3,1),
                  (2,2),(3,2),(4,2),(3,3),(4,3),(4,1)}
    \fill \p circle (2.2pt);

  \node[below left=2pt] at (z) {$z$};

  \node[draw,fill=white,rectangle,line width=0.7pt,
        minimum size=6pt,inner sep=0pt] at (S) {};
  \node[above right=3pt] at (S) {$S_n$};

  \node at (3,-1.7)
    {$\displaystyle
      \Phi_n=\sum_{z\in\{S_0,\ldots,S_{n-1}\}}a(S_n-z)$};
\end{tikzpicture}
\caption{Unit masses deposited by first departures before time $n$
in $M(2,1,2)$. The potential is evaluated at $S_n$, shown as a
square at a newly visited vertex.}
\label{fig:consumed-potential}
\end{figure} 

\section{Potential kernel estimates}\label{sec:potential}

Let $P_h$ and $P_v$ denote the horizontal and vertical simple random
walk operators, and put
\[
 P=\frac{P_h+P_v}{2},\qquad D=P_h-P=\frac{P_h-P_v}{2}.
\]
The one-step operator with strength $b$ is $P+bD$. Note that each step is centered with unit length, and thus $(|S_n|^2-n)$ is a martingale. Let $a$ be the
potential kernel of planar simple random walk, normalized by
\[
 a(0)=0,\qquad (P-I)a=\one_{\{0\}}.
\]
It is even (i.e., $a(z)=a(-z)$) and nonnegative, $a(\pm e_i)=1$, and
\begin{equation}\label{eq:potential-asymptotic}
 a(z)=\frac2\pi\log|z|+c_a+O(|z|^{-2})
 \qquad (|z|\longrightarrow\infty).
\end{equation}
These standard facts, including the Fourier representation used
below, can be found in Chapter 4 of Lawler and Limic
\cite{MR2677157}, see in particular \cite[Proposition 4.4.3, Theorem 4.4.4]{MR2677157}.

For any $x,z \in \ZZ^2$ and $j\ge0$, the relation $(P-I)a=\one_{\{0\}}$ gives
\begin{equation}
\label{EcondincaSz}
    \EE_x^{\eta} [a(S_{j+1}-z) -a(S_j-z) \mid \FF_j]=(P-I+b_j D)a(S_j-z)=\mathbf{1}_{\{S_j=z\}} +b_jDa(S_j-z).
\end{equation}
We need two estimates for the anisotropic correction $d=Da$.
The first bounds its quadratic form. The second bounds the sum of
either sign over any set of a given size. The coefficient in the
second estimate is needed when both signs of cookies are present.

\begin{lemma}\label{lem:anisotropic}
The function $d=Da$ is even and $d(0)=0$. For every finitely
supported real function $f$ on $\ZZ^2$,
\begin{equation}\label{eq:quadratic-bound}
 \left|\sum_{z,w}f(z)f(w)d(z-w)\right|
 \le\sum_z f(z)^2.
\end{equation}
Moreover, for $z=(x,y)\ne0$,
\begin{equation}\label{eq:d-asymptotic}
 d(z)=\frac{y^2-x^2}{\pi|z|^4}+O(|z|^{-4}).
\end{equation}
Write $d_+=\max(d,0)$ and $d_-=\max(-d,0)$ for the positive and
negative parts of $d$. There is a constant $C$ such that every
finite set $E$ with $|E|\le N$ satisfies, for every $x\in\ZZ^2$,
\begin{equation}\label{eq:signed-rows}
 \sum_{z\in E}d_+(x-z)\le\frac1\pi\log(N+2)+C,
 \qquad
 \sum_{z\in E}d_-(x-z)\le\frac1\pi\log(N+2)+C.
\end{equation}
In particular, $\sum_{z\in E}|d(x-z)|\le C\log(N+2)$.
\end{lemma}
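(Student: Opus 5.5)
We work through the Fourier representation of the potential kernel. Write $\phi(\theta)=\tfrac12(\cos\theta_1+\cos\theta_2)$ for the characteristic function of a planar simple random walk step. Then
\[
a(z)=\frac{1}{(2\pi)^2}\int_{[-\pi,\pi]^2}\frac{1-\cos(\theta\cdot z)}{1-\phi(\theta)}\,d\theta .
\]
The operator $D=(P_h-P_v)/2$ acts on $e^{i\theta\cdot z}$ as multiplication by $\delta(\theta)=\tfrac12(\cos\theta_1-\cos\theta_2)$. Since $D$ annihilates constants, $\delta(0)=0$, and the $1$ in the numerator drops out:
\[
d(z)=-\frac{1}{(2\pi)^2}\int\frac{\delta(\theta)}{1-\phi(\theta)}\cos(\theta\cdot z)\,d\theta .
\]
The ratio $m(\theta):=\delta(\theta)/(1-\phi(\theta))$ is bounded. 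Indeed, $1-\phi=\tfrac12[(1-\cos\theta_1)+(1-\cos\theta_2)]$ and $\delta=\tfrac12[(1-\cos\theta_2)-(1-\cos\theta_1)]$, so $|m|\le1$. Hence $d$ is the Fourier coefficient of a bounded integrable multiplier and the integral converges absolutely.

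Evenness of $d$ is immediate from the cosine. For $d(0)$, the symmetry $\theta_1\leftrightarrow\theta_2$ changes the sign of $\delta$ and fixes $\phi$, so $\int m=0$. Alternatively, $d(0)=\tfrac12(a(e_1)+a(-e_1)-a(e_2)-a(-e_2))/2=0$ because $a(\pm e_i)=1$.

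For the quadratic form, Parseval gives
\[
\sum_{z,w}f(z)f(w)d(z-w)=-(2\pi)^{-2}\int m(\theta)|\hat f(\theta)|^2\,d\theta .
\]
Since $|m|\le1$, this is at most $(2\pi)^{-2}\int|\hat f|^2=\sum f^2$ in absolute value, which proves \eqref{eq:quadratic-bound}.

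For the asymptotic \eqref{eq:d-asymptotic}, the simplest route is to apply $D$ directly to the expansion \eqref{eq:potential-asymptotic}. The difficulty is that the $O(|z|^{-2})$ error gives only an $O(|z|^{-2})$ error after differencing, unless we use the finer expansion of $a$ with explicit $|z|^{-2}$ term. By cubic symmetry, Lawler–Limic give $a(z)=\frac2\pi\log|z|+c_a+c'\frac{x^4+y^4}{|z|^6}\cdot(\text{const})+O(|z|^{-4})$, or more simply a $|z|^{-2}$ correction that is a smooth homogeneous function of degree $-2$. Applying the second-difference operator $D$ to the degree $-2$ term produces $O(|z|^{-4})$. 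Applied to $\frac2\pi\log|z|$, the operator $D$ gives $\frac14(\partial_x^2-\partial_y^2)\frac2\pi\log|z|+O(|z|^{-4})$ by Taylor expansion, since the fourth-order terms are $O(|z|^{-4})$ and the odd terms cancel. Now $\partial_x^2\log|z|=(y^2-x^2)/|z|^4$ and $\partial_y^2\log|z|=(x^2-y^2)/|z|^4$, so we get
\[
\frac14\cdot\frac2\pi\cdot\frac{2(y^2-x^2)}{|z|^4}=\frac{y^2-x^2}{\pi|z|^4},
\]
as claimed. If the refined expansion of $a$ is not available in citable form, we instead extract it from the Fourier integral by subtracting the singular part $-(\theta_1^2-\theta_2^2)/|\theta|^2$ of $-m$ near $0$. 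Its inverse transform is exactly the homogeneous term above, by homogeneity and the standard formula for the Riesz-type kernel. The remainder $m(\theta)+(\theta_1^2-\theta_2^2)/|\theta|^2$ is $O(|\theta|^2)$ near $0$, with controlled derivatives, and repeated integration by parts gives $O(|z|^{-4})$. This step is the main technical obstacle, and we would handle it by the integration-by-parts argument against a smooth cutoff.

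For \eqref{eq:signed-rows}, we use the asymptotic. Choose $R$ large and note that $|d|\le C$ everywhere. Then $d_+(u)\le\frac{(y^2-x^2)_+}{\pi|u|^4}+C|u|^{-4}\le\frac1{\pi|u|^2}+C|u|^{-4}$, and similarly for $d_-$. For a set $E$ with $|E|\le N$, the sum $\sum_{z\in E}g(x-z)$ with $g$ radially decreasing is maximized by filling a disc of area about $N$, i.e.\ radius $r\approx\sqrt{N/\pi}$. This rearrangement bound holds up to $O(1)$ for lattice counting. We obtain
\[
\sum_{1\le|u|\le r}\frac1{\pi|u|^2}=\frac1\pi\cdot2\pi\log r+O(1)=\log r+O(1)=\tfrac12\log N+O(1).
\]
This is larger than the claimed $\frac1\pi\log N$, so the crude bound $(y^2-x^2)_+\le|u|^2$ is too lossy. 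Instead we keep the angular factor and integrate in polar coordinates. We have $(y^2-x^2)_+/|u|^4=(-\cos2\varphi)_+/|u|^2$, and $\int_0^{2\pi}(-\cos2\varphi)_+\,d\varphi=2$. The optimal set, the superlevel set of $g(u)=(-\cos2\varphi)_+/(\pi|u|^2)$, is a union of lobes, and the layer-cake computation gives
\[
\sum_{u\in E}g(u)\le\int_{1}^{\rho}\frac{2}{\pi s}\,ds+O(1)=\frac{2}{\pi}\log\rho+O(1),
\]
where $\rho^2\asymp N$. This yields $\frac1\pi\log N+C$. To make this precise, write $\sum_{u\in E}g(u)=\int_0^\infty|\{u\in E:g(u)>t\}|\,dt\le\int_0^\infty\min(N,|\{g>t\}|)\,dt$. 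Here $|\{g>t\}|\approx\frac{1}{\pi t}\cdot\frac12\int_0^{2\pi}(-\cos2\varphi)_+\,d\varphi=\frac1{\pi t}$ up to $O(1)$ lattice corrections, and integrating $\min(N,1/(\pi t))$ over $t\le C$ gives $\frac1\pi\log N+O(1)$. The $O(|u|^{-4})$ error is summable and contributes $O(1)$, and small $|u|$ is handled by boundedness of $d$. The final bound on $|d|$ follows by adding the two estimates.
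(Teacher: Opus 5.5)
Your proposal is correct. The Fourier part and the asymptotic follow the paper, and the signed-row bound \eqref{eq:signed-rows} is proved by a different route.

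\textbf{Where you match the paper.} Your multiplier bound $|m|\le1$, the Parseval estimate \eqref{eq:quadratic-bound}, evenness, and $d(0)=0$ by the coordinate swap are the paper's argument. So is your derivation of \eqref{eq:d-asymptotic} by applying $D$ to a refined expansion of $a$ with a homogeneous degree $-2$ correction.

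The paper takes that expansion from Kozma--Schreiber \cite{MR2041826}. There the correction is $-\operatorname{Re}((x+iy)^4)/(6\pi|z|^6)$, which is not a pure multiple of $(x^4+y^4)/|z|^6$, but only its homogeneity is used. The paper also states the step you leave implicit: $D$ of the $O(|z|^{-4})$ remainder is $O(|z|^{-4})$ by the triangle inequality.

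Do not rely on your Fourier fallback as written. The remainder $m(\theta)+(\theta_1^2-\theta_2^2)/|\theta|^2$ has leading part $(\theta_1^2-\theta_2^2)\theta_1^2\theta_2^2/(6|\theta|^4)$. This is homogeneous of degree $2$ but not a polynomial. Its fourth derivatives are of size $|\theta|^{-2}$, which is not integrable in the plane. So repeated integration by parts falls short of $|z|^{-4}$ unless that homogeneous piece is transformed exactly.

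\textbf{Where you differ: the signed-row bound.} You solve the extremal problem directly. You use a layer-cake bound over the lemniscate-shaped superlevel sets of $(u_2^2-u_1^2)_\pm/(\pi|u|^4)$, whose areas are $1/(\pi t)$.

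The paper instead first proves the disk identity \eqref{eq:disk-rows}, $\sum_{|z|\le r}d_\pm(z)=\frac2\pi\log r+O(1)$. It then splits $E-x$ at radius $\sqrt N$. The inner part is at most the whole disk sum, $\frac1\pi\log N+O(1)$. Each of the at most $N$ outer terms is $O(1/N)$ by the pointwise decay in \eqref{eq:d-asymptotic}.

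Both arguments rest on $\int_0^{2\pi}(-\cos2\varphi)_+\,d\varphi=2$, and they give the same constant. Replacing an area-$N$ lemniscate by a radius-$\sqrt N$ disk changes only the $O(1)$ term.

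\textbf{What each route buys.} The paper's split is shorter and needs lattice sums only over disks. It also produces \eqref{eq:disk-rows}, which is reused in Propositions \ref{prop:bounded-prefix} and \ref{prop:sublattice-cookies}. Your route identifies the extremal configuration, but it requires lattice-point counts in lemniscate regions. The counting error there is $O(t^{-1/2})$, the boundary length, not the $O(1)$ you state. That error is still integrable on $(0,1/\pi]$, so your bound stands.
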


\begin{proof}
The Fourier representation of $a$ is
\[
 a(z)=\int_{[-\pi,\pi]^2}
 \frac{1-\cos(\xi\cdot z)}{1-\phi(\xi)}\,
 \frac{d\xi}{(2\pi)^2},
 \qquad \phi(\xi)=\frac{\cos\xi_1+\cos\xi_2}{2}.
\]
Here $\phi$ is the characteristic function of a planar simple
random walk step. Since
\[
 D\cos(\xi\cdot z)
 =\frac{\cos\xi_1-\cos\xi_2}{2}\cos(\xi\cdot z),
\]
applying $D$ to the representation of $a$ gives
\[
 d(z)=\int_{[-\pi,\pi]^2}m(\xi)\cos(\xi\cdot z)
       \frac{d\xi}{(2\pi)^2},
\]
where
\begin{equation}\label{eq:multiplier}
 m(\xi)=\frac{\cos\xi_2-\cos\xi_1}
 {2-\cos\xi_1-\cos\xi_2}.
\end{equation}
The value at zero is immaterial. The inequality
$|\cos\xi_1-\cos\xi_2|\le2-\cos\xi_1-\cos\xi_2$ gives
$|m|\le1$. If $\widehat f(\xi)=\sum_z f(z)e^{i\xi\cdot z}$,
then
\[
 \sum_{z,w}f(z)f(w)d(z-w)
 =\int_{[-\pi,\pi]^2}m(\xi)|\widehat f(\xi)|^2
 \frac{d\xi}{(2\pi)^2}.
\]
Parseval's identity proves \eqref{eq:quadratic-bound}. Note that since $f$ has a finite support, this calculation requires no absolute summability of $d$. Evenness follows from \eqref{eq:multiplier}.
Interchanging the two coordinates changes the sign of $d(z)$
and gives $d(0)=0$.

To prove \eqref{eq:d-asymptotic}, we use the more precise expansion
of Kozma and Schreiber \cite[Equation (4)]{MR2041826}:
\[
 a(z)=\frac2\pi\log|z|+c_a
      +h(z)+\rho(z),\qquad z=(x,y) \ne 0,
\]
where
\[
 h(z)=-\frac{\operatorname{Re}\bigl((x+iy)^4\bigr)}
                  {6\pi|z|^6},\qquad z=(x,y)\ne0,
\]
and $\rho(z)=O(|z|^{-4})$. The function $h$ is smooth away from the
origin and satisfies $h(tz)=t^{-2}h(z)$ for $t>0$. Its second
derivatives are therefore homogeneous of degree $-4$, so
\[
 |\partial_i^2h(z)|\le C|z|^{-4},\qquad i=1,2.
\]
For sufficiently large $|z|$, Taylor's formula gives
\[
 h(z+e_i)-2h(z)+h(z-e_i)=\int_{-1}^{1}(1-|t|) \partial_i^2h(z+te_i)\,dt =O(|z|^{-4}),
\]
since $|z+te_i|\ge |z|/2$ for $|t|\le1$.
As $D$ is one quarter of the difference of these two central
second differences, this proves $Dh(z)=O(|z|^{-4})$.
For the remainder, the triangle inequality alone gives
\[
 |D\rho(z)|
 \le\frac14\sum_{i=1}^{2}
       \bigl(|\rho(z+e_i)|+|\rho(z-e_i)|\bigr)
 \le C|z|^{-4}.
\]
Thus no derivative estimate on $\rho$ is needed. Taylor expansion of the logarithmic term therefore yields \eqref{eq:d-asymptotic}:
\[
 d(z)=Da(z)=\frac1{2\pi}(\partial_x^2-\partial_y^2)\log|z|
        +O(|z|^{-4})
      =\frac{y^2-x^2}{\pi|z|^4}+O(|z|^{-4}).
\]

In polar coordinates, the leading term of $d(z)$ is
$-\cos(2\theta)/(\pi r^2)$. The angular integral of each of $(-\cos (2\theta))_+$ and $(-\cos (2\theta))_{-}$ equals $2$. Hence
\begin{equation}\label{eq:disk-rows}
 \sum_{|z|\le r}d_\pm(z)=\frac2\pi\log r+O(1),
 \qquad r\ge2.
\end{equation}
For the comparison with a polar integral, the remainder in
\eqref{eq:d-asymptotic} is summable. The positive and negative
parts of $(y^2-x^2)/(\pi|z|^4)$ are locally Lipschitz, with Lipschitz
bound $C|z|^{-3}$ on unit squares away from zero. The sum of the
errors over these squares is therefore bounded. This justifies
the error term in \eqref{eq:disk-rows}.

Split $E-x$ at radius $\sqrt N$. Equation \eqref{eq:disk-rows}
bounds the inner contribution by $\pi^{-1}\log N+C$. Outside
that disk, \eqref{eq:d-asymptotic} bounds each term by $C/N$;
there are at most $N$ terms. Enlarging $C$ for bounded $N$ proves
\eqref{eq:signed-rows}. Adding the two inequalities gives the
last assertion.
\end{proof}

The following packing estimate states that a measure with large total mass must generate a large potential, provided that no vertex carries more than one unit.

\begin{lemma}\label{lem:packing}
Let $\mu$ be a finitely supported measure on $\ZZ^2$ with
$0\le\mu(z)\le1$, and let $M=\sum_z\mu(z)$. Then, for every
$x\in\ZZ^2$,
\begin{equation}\label{eq:packing}
 \sum_z\mu(z)a(x-z)\ge\frac1\pi M\log M-CM.
\end{equation}
More generally, if $0\le\mu(z)\le K$ for some $K>0$, then
\begin{equation}\label{eq:packing-scaled}
 \sum_z\mu(z)a(x-z)\ge\frac M\pi\log\frac M K-CM.
\end{equation}
\end{lemma}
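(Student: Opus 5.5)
The natural plan is a bathtub (rearrangement) argument. Since $a$ is nonnegative and, by \eqref{eq:potential-asymptotic}, grows like $\frac2\pi\log|z|$, a measure with $0\le\mu\le K$ and total mass $M$ makes $\sum_z\mu(z)a(x-z)$ smallest when the mass sits where $a(x-\cdot)$ is smallest. So I would compare with the measure that puts mass $K$ on the vertices closest to $x$, filling a disk $B(x,r)$ with $K|B(x,r)|\approx M$, i.e.\ $r\asymp\sqrt{M/K}$. The estimate should then read $\sum_z\mu(z)a(x-z)\gtrsim M\cdot\frac2\pi\log\sqrt{M/K}=\frac M\pi\log\frac MK$, up to $O(M)$. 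I will prove \eqref{eq:packing-scaled}; then \eqref{eq:packing} is the case $K=1$.

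\emph{Step 1: a pointwise lower bound.} From \eqref{eq:potential-asymptotic} and $a\ge0$, I would derive a constant $C_0$ with
\[
 a(z)\ge\frac2\pi\log|z|-C_0\quad\text{for all } z\neq0 .
\]
Large $|z|$ is covered by the asymptotic, and finitely many $z$ by nonnegativity. Set $g(z)=\frac1\pi\log(|z|^2+1)-C_1$. Then $a(z)\ge g(z)$ for every $z$, including $z=0$, once $C_1\ge C_0$ is large enough. The function $g(x-\cdot)$ is a radially increasing function of $|z-x|$.

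\emph{Step 2: the bathtub bound.} Let $t\ge0$ be chosen so that $K\,|\{z:|z-x|^2\le t\}|\ge M$, and take the smallest such $t$. For any $\mu$ with $0\le\mu\le K$ and $\sum_z\mu(z)=M$, the standard exchange argument gives
\[
 \sum_z\mu(z)g(x-z)\ \ge\ \sum_z\nu(z)g(x-z),
\]
where $\nu$ fills the sublevel sets of $|z-x|$ at height $K$ in increasing order until total mass $M$ is reached. The exchange argument is valid because moving mass inward never increases $\sum\mu g$.

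It remains to evaluate the right side. It is at least
\[
 \frac1\pi\sum_z\nu(z)\log(|x-z|^2+1)-C_1M .
\]
Lattice point counting gives $|\{z:|z-x|^2\le s\}|\le\pi s+C\sqrt s+1$. Hence the mass of $\nu$ inside radius $\sqrt s$ is at most $\min\bigl(M,K(\pi s+C\sqrt s+1)\bigr)$. Writing the sum as a layer-cake integral $\int_0^\infty \nu\{z:\log(|x-z|^2+1)>u\}\,du$ then yields
\[
 \sum_z\nu(z)\log(|x-z|^2+1)\ \ge\ M\log\frac{M}{K}-CM .
\]
One can also compare directly with $K\int_{|y|\le\rho}\log(|y|^2+1)\,dy$, where $\pi\rho^2K=M$, which equals $M\log(M/K)-M+O(\cdot)$ when $M/K$ is large. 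Dividing by $\pi$ gives \eqref{eq:packing-scaled}.

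\emph{The main obstacle.} The constant must be independent of $K$ and $M$, and the regime $M/K$ small needs care. When $M\le CK$, the claimed bound $\frac M\pi\log\frac MK-CM$ is at most $M(\frac1\pi\log C-C)\le 0$ for $C$ large, so it holds trivially because $a\ge0$. In the regime $M/K$ large, I would handle the counting error $C\sqrt s$ carefully. The comparison is only needed up to $O(M)$, and the error contributes about $K\sqrt{M/K}\log(M/K)$, which is $o(M)$ uniformly. That uniform absorption is where the real bookkeeping lies.
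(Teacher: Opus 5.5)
Your proposal is correct and follows essentially the same route as the paper: a lower bound $a(z)\ge\frac2\pi\log|z|-C$, obtained from \eqref{eq:potential-asymptotic} together with $a\ge0$, and then a rearrangement argument that packs the mass onto the sites closest to $x$. The paper avoids the lattice-counting bookkeeping you flag as the main obstacle in two ways. It indexes sites by rank, using $1+|z_k-x|^2\ge ck$ to get $a(x-z_k)\ge\frac1\pi\log k-C$, and sums $\log k$ in the case $K=1$; it then obtains \eqref{eq:packing-scaled} by applying \eqref{eq:packing} to $\mu/K$.
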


\begin{proof}
Order the lattice points $z_k$ by increasing distance from $x$.
Counting lattice points in disks gives $1+|z_k-x|^2\ge ck$.
Equation \eqref{eq:potential-asymptotic} then implies
\[
 a(x-z_k)\ge\frac1\pi\log k-C.
\]
Consequently,
\[
 \sum_z\mu(z)a(x-z)
 \ge\sum_{k\ge1}\mu(z_k)\left(\frac1\pi\log k-C\right).
\]
The coefficients on the right increase with $k$. Subject to
$0\le\mu(z_k)\le1$ and $\sum_k\mu(z_k)=M$, by the rearrangement inequality, this sum is
minimized by filling the first $\lfloor M\rfloor$ positions and
placing the remaining mass at the next position. Summing $\log k$ proves
\eqref{eq:packing} for $M\ge1$. For $M<1$, it follows from $a\ge0$.
Applying this estimate to $\mu/K$ and multiplying by $K$ proves
\eqref{eq:packing-scaled}.
\end{proof}

\section{Uniform excitation and range estimates}\label{sec:range}

Throughout this section, $\EE=\EE_0^\eta$ denotes expectation for
the walk started at the origin in the environment $\eta$, and all
bounds are uniform over $\eta\in\cE_{A,B}$. Since $\cE_{A,B}$ is
translation-invariant, this also covers every starting point.

We first treat nonnegative cookies, for which the quadratic form
in Lemma \ref{lem:anisotropic} appears directly. We then separate
the two signs to prove the excitation part of
Proposition \ref{prop:occupation}. A potential generated by all
visited vertices gives its range part.

\subsection{Nonnegative cookies}\label{sec:onesign}

Suppose first that every cookie is nonnegative and that the total
strength at each vertex is at most one. Recall the consumed
measure $\mu_n$ and its potential $\Phi_n$ from \eqref{defPhin}.
The total mass is $\Xi_n$, and the update
$\mu_{j+1}=\mu_j+b_j\delta_{S_j}$ adds the current cookie just
before the next step. Since $a(S_{j+1}-S_j)=1$, the newly added
mass contributes $b_j$ to $\Phi_{j+1}$. Applying
\eqref{EcondincaSz} to the existing measure $\mu_j$ gives
\begin{equation}\label{eq:potential-increment}
 \EE[\Phi_{j+1}-\Phi_j\mid\FF_j]
 =\mu_j(S_j)+b_j+b_j(d*\mu_j)(S_j),
\end{equation}
where $(d*\mu)(x)=\sum_zd(x-z)\mu(z)$. The budget gives
$\mu_j(S_j)+b_j\le1$. Evenness of $d$ and $d(0)=0$ yield
the pathwise identity
\begin{equation}\label{eq:quadratic-identity}
 \sum_{j<n}b_j(d*\mu_j)(S_j)
 =\sum_{i<j<n}b_ib_jd(S_j-S_i)
 =\frac12\sum_{z,w}\mu_n(z)\mu_n(w)d(z-w).
\end{equation}
By \eqref{eq:quadratic-bound} and $0\le\mu_n\le1$, its absolute
value is at most $\Xi_n/2$. Summing
\eqref{eq:potential-increment} and using $\Phi_0=0$ and
$\Xi_n\le n$ therefore gives $\EE\Phi_n\le3n/2$.

\subsection{Cookies of both signs}

To estimate $\Xi_n$ defined by \eqref{defXingen}, we record the two signs separately. Terms involving cookies of the same sign are still controlled by quadratic forms, while
\eqref{eq:signed-rows} controls terms involving opposite signs.

\begin{proof}[Proof of the excitation bound in Proposition \ref{prop:occupation}]
Fix $A,B\ge0$ with $AB<1$.  We shall prove
\begin{equation}\label{eq:excitation-bound}
 \EE\Xi_n\le C\frac{A+B+2AB}{1-AB}
                    \frac n{\log(n+2)}.
\end{equation}
Write $b_j^+=(b_j)_+$ and
$b_j^-=(b_j)_-$, and set
\[
 \mu_n^\pm(z)=\sum_{j<n}b_j^\pm\one_{\{S_j=z\}},\qquad
 \Phi_n^\pm=\sum_z\mu_n^\pm(z)a(S_n-z).
\]
Their total masses are $\Xi_n^\pm=\sum_{j<n}b_j^\pm$, so
$\Xi_n^++\Xi_n^-=\Xi_n\le n$. Also,
$0\le\mu_n^+\le A$ and $0\le\mu_n^-\le B$.
The same calculation as in \eqref{eq:potential-increment} gives
\begin{equation}\label{eq:signed-increment}
 \EE[\Phi_{j+1}^\pm-\Phi_j^\pm\mid\FF_j]
 =\mu_j^\pm(S_j)+b_j^\pm+b_j(d*\mu_j^\pm)(S_j).
\end{equation}
The first two terms are at most $A$ for the plus sign and at
most $B$ for the minus sign. Put $m_\pm=\EE\Xi_n^\pm$ and
$H_n=\pi^{-1}\log(n+2)+C$, with $C$ from
\eqref{eq:signed-rows}. For the plus potential, the same-sign
terms satisfy
\[
 \sum_{j<n}b_j^+(d*\mu_j^+)(S_j)
 =\frac12\sum_{z,w}\mu_n^+(z)\mu_n^+(w)d(z-w)
 \le\frac A2\Xi_n^+.
\]
Here we used \eqref{eq:quadratic-bound} and
$\sum_z\mu_n^+(z)^2\le A\Xi_n^+$. The opposite-sign terms obey
\[
 -b_j^-(d*\mu_j^+)(S_j)
 \le b_j^-\sum_z\mu_j^+(z)d_-(S_j-z)
 \le AH_nb_j^-,
\]
since the support contains at most $n$ vertices. For the minus
potential, the same-sign contribution is
\[
 -\sum_{j<n}b_j^-(d*\mu_j^-)(S_j)
 =-\frac12\sum_{z,w}\mu_n^-(z)\mu_n^-(w)d(z-w)
 \le\frac B2\Xi_n^-,
\]
by \eqref{eq:quadratic-bound}. The opposite-sign term is
bounded by $BH_nb_j^+$.
Summing \eqref{eq:signed-increment} therefore gives
\begin{equation}\label{eq:signed-upper}
 \begin{aligned}
 \EE\Phi_n^+&\le An+\frac A2m_++AH_nm_-,\\
 \EE\Phi_n^-&\le Bn+\frac B2m_-+BH_nm_+.
 \end{aligned}
\end{equation}

For $A>0$, the packing estimate \eqref{eq:packing-scaled} and
the elementary inequality
\[
 x\log(x/A)\ge x\log(n+2)-\frac{A(n+2)}e,
 \qquad x\ge0,
\]
give
\[
 \EE\Phi_n^+\ge\frac{m_+}{\pi}\log(n+2)-CAn.
\]
We used $m_+\le An$ to absorb the linear term in the packing estimate. The same argument applies to the minus sign. When $A=0$ or $B=0$ the corresponding mass vanishes, so its lower bound holds
directly. Comparing with \eqref{eq:signed-upper} and using
$m_++m_-\le n$ yields
\begin{equation}\label{eq:two-sign-system}
 \begin{aligned}
 (m_+-Am_-)\log(n+2)&\le CAn,\\
 (m_--Bm_+)\log(n+2)&\le CBn.
 \end{aligned}
\end{equation}
Multiply the first inequality by $1+B$ and the second by $1+A$,
and add. Since $AB<1$, we obtain \eqref{eq:excitation-bound}.

For the recurrence proof, we also retain the leading coefficient.
The bound just proved gives $m_\pm=O_{A,B}(n/\log n)$.
For $A>0$, Jensen's inequality improves the packing lower bound \eqref{eq:packing-scaled} to
\[
 \EE\Phi_n^+
 \ge\frac{m_+}{\pi}\log\frac{m_+}{A}-Cm_+
 =\frac{m_+}{\pi}\log n-o_{A,B}(n).
\]
Indeed, $x\log(An/x)$ is increasing for $0<x\le An/e$, so the
coarse bound gives
$m_+\log(An/m_+)=O_{A,B}(n\log\log n/\log n)=o(n)$. Comparing
this lower bound and its minus counterpart with
\eqref{eq:signed-upper} gives
\[
 \begin{aligned}
 (m_+-Am_-)\log n&\le\pi An+o_{A,B}(n),\\
 (m_--Bm_+)\log n&\le\pi Bn+o_{A,B}(n).
 \end{aligned}
\]
Again, when $A=0$ (respectively $B=0$), the first (respectively second) inequality holds directly. Set
$\Lambda(A,B)=\pi(A+B+2AB)/(1-AB)$. Multiplying the two
inequalities by $1+B$ and $1+A$, respectively, and adding proves
\begin{equation}\label{eq:leading-excitation}
 \sup_{\eta\in\cE_{A,B}}\sup_{x\in\ZZ^2}
 \EE_x^\eta\Xi_n
 \le\bigl(\Lambda(A,B)+o_{A,B}(1)\bigr)\frac n{\log n}
 \qquad(n\longrightarrow\infty).
\end{equation}
For fixed $A,B$, every error above is uniform over the environment
and starting point. These estimates therefore also apply to the
remaining environment after any finite history, with the same
error bound.
\end{proof}

\subsection{The full range}

Some vertices may have only zero-strength cookies, so the
consumed excitation need not count every visited vertex.
We therefore place one unit of mass at each visited vertex.
The excitation estimate just proved controls the correction
to this potential.

\begin{proof}[Proof of the range bound and uniformity in Proposition \ref{prop:occupation}]
Write $\mathcal V_j=\{S_0,\ldots,S_j\}$ for the visited set
through time $j$, and thus $|\mathcal V_j|=R_{j+1}$. Define
\[
 V_j=\sum_{z\in\mathcal V_j}a(S_j-z).
\]
Note that $S_{j+1}$ contributes $a(S_{j+1}-S_{j+1})=a(0)=0$ to $V_{j+1}$ even when it is newly visited. By \eqref{EcondincaSz}, we obtain the exact identity
\[
 \EE[V_{j+1}-V_j\mid\FF_j]
 =\sum_{z\in\mathcal V_j}\EE [a(S_{j+1}-z)-a(S_j-z) \mid \FF_j] =1+b_j(d*\one_{\mathcal V_{j}})(S_j),
\]
Using the last assertion in Lemma \ref{lem:anisotropic} and $|\mathcal V_{j}|\le n$ for $j\leq n-1$, we obtain $|(d*\one_{\mathcal V_{j}})(S_j)| \leq C \log(n+2)$ and thus
\[
 \EE V_n\le n+1+C\log(n+2)\EE\Xi_n
 \le C\frac{(1+A)(1+B)}{1-AB}\,n,
\]
where we also used \eqref{eq:excitation-bound} in the last inequality. By the packing estimate, applied to $\one_{\mathcal V_n}$,
\[
 \EE\bigl[|\mathcal V_n|\log(1+|\mathcal V_n|)\bigr]
 \le C\frac{(1+A)(1+B)}{1-AB}\,n.
\]
For $n\ge3$, split according to $|\mathcal V_n|\le\sqrt n$ to get
\[
 \EE R_n\le\sqrt n+
 \frac{2}{\log n}\EE\bigl[|\mathcal V_n|\log(1+|\mathcal V_n|)\bigr].
\]
This gives the range bound with the stated choice of $C_{A,B}$,
after increasing the universal constant for small $n$. Combining
it with \eqref{eq:excitation-bound} proves
\eqref{eq:main-occupation}.

Every estimate above is independent of the starting point. To obtain
the conditional assertion, fix an
almost surely finite stopping time $T$. Delete from each stack the
cookies used before time $T$ and translate the current position $S_T$
to the origin. Deleting cookies only weakens the bounds
\eqref{eq:budgets}, and translation leaves them unchanged, so the
remaining environment still belongs to $\cE_{A,B}$. Conditionally
on $\FF_T$, the walk after time $T$ is governed by the same law
\eqref{eq:cookie-law} in this admissible environment, and the
finite-time calculations above apply with the same constants. In
this assertion, the range counts the distinct
vertices of the future segment $S_T,\ldots,S_{T+n-1}$, whether or
not they were visited before $T$.
\end{proof}

We shall also use the following consequence of the conditional
occupation estimate.

\begin{lemma}\label{lem:occupation-moments}
Suppose that $A,B\ge0$ and $AB<1$. There is a constant
$C_{A,B}<\infty$ such that, uniformly over
$\eta\in\cE_{A,B}$ and starting points $x$, for all integers
$n,k\ge1$,
\[
 \EE_x^\eta\bigl[\Xi_n^k+R_n^k\bigr]
 \le 2k!\left(\frac{C_{A,B}n}{\log(n+2)}\right)^k.
\]
The same bound holds conditionally on $\FF_T$ after any almost
surely finite stopping time $T$, with $\Xi_n$ and $R_n$ replaced
by the excitation and the range of the segment
$S_T,\ldots,S_{T+n-1}$.
\end{lemma}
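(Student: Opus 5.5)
This is a Kac-moment (Khasminskii-type) argument: the conditional first-moment bound in Proposition \ref{prop:occupation}, iterated along the path, yields factorial moment bounds. Treat $\Xi_n$ and $R_n$ separately, prove $\EE[X^k]\le k!\,m^k$ for each with $m=C_{A,B}n/\log(n+2)$, and add the two; this gives the factor $2$.

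\emph{Excitation.} Write $\Xi_n=\sum_{j<n}|b_j|$. Expand the $k$-th power as an ordered sum,
\[
\Xi_n^k\le k!\sum_{j_1\le j_2\le\cdots\le j_k<n}|b_{j_1}|\cdots|b_{j_k}|,
\]
where the weak inequalities absorb coincident indices (each multiset of indices appears at least once, with multiplicity coefficient at most $k!$). Condition on $\FF_{j_{k-1}}$. The innermost sum $\sum_{j_{k-1}\le j_k<n}|b_{j_k}|$ is the excitation consumed by the segment starting at the stopping time $T=j_{k-1}$ with length at most $n$. By the conditional form of Proposition \ref{prop:occupation}, its conditional expectation is at most $m$; here we use that the bound is monotone in the segment length, since excitation is nondecreasing in time. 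The remaining product $|b_{j_1}|\cdots|b_{j_{k-1}}|$ is $\FF_{j_{k-1}}$-measurable. Summing over $j_{k-1}$ and inducting on $k$ gives $\EE\Xi_n^k\le k!\,m^k$.

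\emph{Range.} This step needs more care, because $R_n$ is not a sum of adapted increments. Set $\rho_j=\one_{\{S_j\notin\{S_0,\ldots,S_{j-1}\}\}}$, so that $R_n=\sum_{j<n}\rho_j$, and run the same ordered expansion. After conditioning on $\FF_{j_{k-1}}$, the innermost sum counts the vertices visited in $[j_{k-1},n)$ that were never visited before. This is at most the range of the future segment $S_{j_{k-1}},\ldots,S_{n-1}$, whose conditional expectation is at most $m$ by Proposition \ref{prop:occupation} applied at the stopping time $T=j_{k-1}$. The induction then proceeds exactly as for the excitation.

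The main care point is the combinatorial bookkeeping for repeated indices. An equivalent approach avoids it: prove by induction on $k$ the tail-sum bound
\[
\EE\Bigl[\Bigl(\sum_{T\le j<n}X_j\Bigr)^k\Bigm|\FF_T\Bigr]\le k!\,m^k
\]
for nonnegative adapted $X_j$ whose conditional tail sums have mean at most $m$. This uses the identity
\[
Y^k=\sum_{j}\bigl(Y_{\ge j}^k-Y_{\ge j+1}^k\bigr)\le k\sum_j X_j\,Y_{\ge j}^{k-1},
\]
where $Y_{\ge j}$ denotes the tail sum from $j$. Conditioning on $\FF_j$ and applying the induction hypothesis at the stopping time $j$ bounds the right side by $k\,(k-1)!\,m^{k-1}\,\EE\sum_j X_j\le k!\,m^k$. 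Because every step only uses the stopping-time form of Proposition \ref{prop:occupation}, the same argument started at a general stopping time $T$ gives the conditional statement.
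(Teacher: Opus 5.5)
Your proposal is correct. Your second (``equivalent'') route is exactly the paper's proof: the telescoping bound $A_i^k\le k\sum_{j\ge i}a_jA_j^{k-1}$ with induction on $k$, applied to $a_j=|b_j|$ and to the first-visit indicators, using only the conditional form of Proposition \ref{prop:occupation} at the times $j$ (or $T+j$). Your ordered-expansion version is an equally valid repackaging of the same iterated conditioning, and extending each segment to length $n$ takes the place of the paper's appeal to the monotonicity of $t/\log(t+2)$.
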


\begin{proof}
Fix $n$, and put $u=C_{A,B}n/\log(n+2)$, with the constant
from Proposition \ref{prop:occupation}. Consider either
$a_j=|b_j|$ or
\[
 a_j=\one_{\{S_j\notin\{S_0,\ldots,S_{j-1}\}\}},
 \qquad 0\le j<n,
\]
where the set is empty when $j=0$. In both cases, $a_j$ is
$\FF_j$-measurable and belongs to $[0,1]$. Write
$A_i=\sum_{j=i}^{n-1}a_j$. For the second choice, $A_i$
counts vertices first visited during this interval, so it is
at most $|\{S_i,\ldots,S_{n-1}\}|$. Thus the conditional
estimate in Proposition \ref{prop:occupation} gives, for both choices,
\[
 \EE[A_i\mid\FF_i]\le u,\qquad 0\le i<n.
\]
Here we used that $t/\log(t+2)$ is increasing for $t\ge0$.

We prove by induction on $k$ that
$\EE[A_i^k\mid\FF_i]\le k!u^k$ for every $i<n$.
The case $k=1$ was just established. Since
$A_j-A_{j+1}=a_j$ and $A_n=0$, telescoping and the mean value theorem yield
\[
 A_i^k= \sum_{j=i}^{n-1} (A_j^k-A_{j+1}^k) \le k\sum_{j=i}^{n-1}a_jA_j^{k-1}.
\]
For $k\ge2$, the induction hypothesis and the
$\FF_j$-measurability of $a_j$ therefore imply
\[
 \begin{aligned}
 \EE[A_i^k\mid\FF_i]
 &\le k\sum_{j=i}^{n-1}
   \EE\bigl[a_j\EE[A_j^{k-1}\mid\FF_j]\mid\FF_i\bigr]\\
 &\le k!u^{k-1}\EE[A_i\mid\FF_i]
 \le k!u^k.
 \end{aligned}
\]
Taking $i=0$ for the two choices of $a_j$ proves the assertion.
After a stopping time $T$, the same argument uses the filtration
$(\FF_{T+j})_{j\ge0}$ and counts first visits relative to the
segment starting at $T$. At each later time these indicators are
still measurable from the past, and their remaining sum is bounded
by the range of the remaining segment. The conditional assertion
therefore follows from the same estimate.
\end{proof}

\section{The range of M(2,1,2)}\label{sec:asymptotic}

For the initially all-fresh walk, each visited vertex contributes exactly
one unit to the consumed measure. The growing potential from
Section \ref{sec:onesign} therefore records the entire range. We show
that this potential is asymptotic to $n$, and that it differs from
$R_n\log n/\pi$ by $o(n)$. These two comparisons determine the
leading constant in the range. 

\begin{proposition}\label{prop:range-limit}
For the walk $M(2,1,2)$, the convergence \eqref{eq:m212-range} holds. Moreover, for all sufficiently large $n$,
\begin{equation}\label{eq:range-rates}
 \EE\left|\frac{\log n}{n}R_n-\pi\right|
 \le\frac C{\sqrt{\log n}},\qquad
 \left|\frac{\log n}{n}\EE R_n-\pi\right|
 \le C\frac{\log\log n}{\log n}.
\end{equation}
\end{proposition}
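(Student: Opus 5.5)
We follow the two comparisons announced in the proof strategy: first $\Phi_n=n+o(n)$, then $\Phi_n=\frac{R_n}{\pi}\log n+o(n)$, both with quantitative error terms.

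\emph{Step 1: martingale decomposition of $\Phi_n$.} For $M(2,1,2)$ we have $b_j=\one_{\{S_j\notin\{S_0,\dots,S_{j-1}\}\}}$ and $\mu_n=\one_{\mathcal V_{n-1}}$. Hence $\Xi_n=R_n$, and the budget term in \eqref{eq:potential-increment} is $\mu_j(S_j)+b_j=1$ exactly. Summing \eqref{eq:potential-increment} and using \eqref{eq:quadratic-identity}, we get
\[
 \Phi_n=n+Q_n+M_n,\qquad Q_n=\tfrac12\sum_{z,w}\mu_n(z)\mu_n(w)d(z-w),
\]
where $M_n$ is a martingale and $|Q_n|\le R_n/2$ by \eqref{eq:quadratic-bound}. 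Proposition \ref{prop:occupation} and Lemma \ref{lem:occupation-moments} then give $Q_n=O(n/\log n)$ in every $L^p$. For the martingale, we bound the conditional variance of its increments. Since $a(S_{j+1}-z)-a(S_j-z)$ is a discrete gradient, an increment is at most $\sum_{z\in\mathcal V_j}|\nabla a(S_j-z)|\lesssim\sum_z 1/(1+|S_j-z|)$. Packing $R_n$ points around $S_j$ bounds this by $C\sqrt{R_n}$. Therefore
\[
 \EE M_n^2\le C n\,\EE R_n\le Cn^2/\log n,
\]
so $\EE|M_n|\le Cn/\sqrt{\log n}$. This is the source of the $1/\sqrt{\log n}$ rate. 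Higher moments follow in the same way, using Burkholder's inequality together with Lemma \ref{lem:occupation-moments}.

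\emph{Step 2: comparison of $\Phi_n$ with $R_n\log n/\pi$.}
\begin{itemize}
\item \emph{Lower bound.} Lemma \ref{lem:packing} gives $\Phi_n\ge\frac1\pi R_n\log R_n-CR_n$.
\item \emph{Upper bound.} We use confinement. Since $|S_n|^2-n$ is a martingale, Doob's inequality gives $\PP(\max_{k\le n}|S_k|>\sqrt n\log n)\le 1/\log^2 n$. On the complementary event, every pair of visited points satisfies $a(S_n-z)\le\frac2\pi\log(2\sqrt n\log n)+C=\frac1\pi\log n+O(\log\log n)$. Hence
\[
 \Phi_n\le\frac{R_n}{\pi}\log n+CR_n\log\log n .
\]
On the exceptional event we use $a(S_n-z)\le C\log n$ and bound the contribution with Cauchy--Schwarz and the moment bounds. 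This contribution is $o(n/\sqrt{\log n})$ in $L^1$.
\item \emph{Matching the lower bound.} We must replace $\log R_n$ by $\log n$. On $\{R_n\ge n/\log^2 n\}$ the difference is $O(R_n\log\log n)=O(n\log\log n/\log n)$. On the complementary event, $R_n\log n$ is itself $O(n/\log n)$.
\end{itemize}
Combining these, $\frac{R_n\log n}{\pi}=\Phi_n+O(n\log\log n/\log n)$ in $L^1$.

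\emph{Step 3: conclusions.}
\begin{itemize}
\item \emph{$L^1$ rate.} Steps 1 and 2 give $\EE|\frac{\log n}{n}R_n-\pi|\le C/\sqrt{\log n}$.
\item \emph{Rate for the mean.} Taking expectations kills $M_n$, so $\EE\Phi_n=n+O(n/\log n)$. Together with the error of Step 2 this yields the $\log\log n/\log n$ bound.
\item \emph{$L^p$ convergence.} This follows from convergence in probability and the uniform integrability supplied by Lemma \ref{lem:occupation-moments}, since $(\frac{\log n}{n}R_n)^k$ has bounded moments of every order.
\item \emph{Almost sure convergence.} Take the subsequence $n_k=\lfloor e^{k^{3}}\rfloor$, or a similar sparse sequence. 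Using higher-moment bounds on $M_n$ and on the exceptional confinement events, Borel--Cantelli gives convergence along $n_k$. We then interpolate using monotonicity of $R_n$, which requires $n_{k+1}/n_k\to1$. This tension is the main obstacle. The $L^2$ rate $1/\log n$ for $(M_n/n)^2$ is too weak for a geometric subsequence. So we would use $2p$-th moments, $\EE|M_n/n|^{2p}\le C_p(\log n)^{-p}$, obtained from Burkholder's inequality and the moment lemma. These make $\sum_k$ finite along $n_k=\lfloor \exp(k^{1/2})\rfloor$, which has ratio tending to $1$. The confinement event needs the same treatment, via a stronger maximal inequality for $|S_k|^2$ using its higher moments.
\end{itemize}
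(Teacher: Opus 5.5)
Your proposal is correct, and its core is the paper's proof. You use the same martingale decomposition $\Phi_n=n+\tfrac12\sum_{z,w}\mu_n(z)\mu_n(w)d(z-w)+\mathcal M_n$. You use the same $C\sqrt{R_j}$ increment bound, giving $\EE\mathcal M_n^2\le Cn^2/\log n$. You use the packing lower bound and confinement to radius $\sqrt n\log n$ for the upper bound. Two minor variations give the same rates. You use Doob's $L^2$ inequality plus Cauchy--Schwarz with $\EE R_n^2$ where the paper uses maximal Azuma. You split on $\{R_n\ge n/\log^2 n\}$ where the paper applies Jensen to $x\log(n/x)$.

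The genuine difference is the almost sure part. You prove convergence along $n_k=\lfloor e^{\sqrt k}\rfloor$ from fixed-time moments: $\EE|\mathcal M_n/n|^{2p}\le C_p(\log n)^{-p}$ via Burkholder and Lemma~\ref{lem:occupation-moments}, likewise for $Q_n$, and higher-moment Doob bounds for confinement. You then interpolate by monotonicity of $R_n$, which forces $n_{k+1}/n_k\to1$. So your first suggestion $e^{k^3}$ would not work, as you noticed. Along the subsequence you also need $R_{n_k}=O(n_k/\log n_k)$ a.s. for both comparisons; this follows from packing once $\Phi_{n_k}=O(n_k)$.

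The paper avoids interpolation altogether. The \emph{maximal} BDG inequality on dyadic blocks, $\EE\max_{j\le 2^k}|\mathcal M_j|^4\le C2^{4k}/k^2$, gives $\mathcal M_n/n\to0$ for all $n$ at once. Azuma's tail $e^{-c(\log n)^2}$ is summable over all $n$, so confinement and both pathwise comparisons hold at every large $n$. Your remark that a geometric subsequence is too coarse is therefore true only without a maximal inequality. The paper's route is shorter. Yours needs only fixed-time moments and monotonicity of the range, at the cost of a slower subsequence and moments of order $2p>4$.
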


\begin{proof}
In the notation of Section \ref{sec:onesign},
\[
 \mu_n=\mathbf1_{\{S_0,\ldots,S_{n-1}\}},\qquad
 \Phi_n=\sum_z\mu_n(z)a(S_n-z).
\]
At time $j$, if the site $S_j$ is newly visited, i.e., $S_j \notin \{S_0,\ldots,S_{j-1}\}$, then $\mu_j(S_j)=0$ and $b_j=1$; otherwise, $\mu_j(S_j)=1$ and $b_j=0$. Thus the first two terms on the right-hand side of \eqref{eq:potential-increment} always sum to one.
Put
\[
 Q_n=\sum_{z,w}\mu_n(z)\mu_n(w)d(z-w).
\]
Then \eqref{eq:potential-increment} and \eqref{eq:quadratic-identity} give
\[
 Q_{j+1}-Q_j=2b_j(d*\mu_j)(S_j),\qquad
 \EE[\Phi_{j+1}-\Phi_j\mid\FF_j]
 =1+\frac12(Q_{j+1}-Q_j).
\]
In particular, $Q_{j+1}-Q_j$ is $\FF_j$-measurable, and
\begin{equation}\label{eq:asymptotic-martingale}
 \mathcal M_n=\Phi_n-n-\frac12Q_n
\end{equation}
is a martingale starting at zero. All quantities have finite moments
at each fixed time, since the walk and the sources lie in a deterministic
finite ball.

We first prove the two comparisons in $L^1$:
\[
 \EE|\Phi_n-n|=o(n),\qquad
 \EE\left|\Phi_n-\frac{R_n}{\pi}\log n\right|=o(n).
\]
The potential-kernel asymptotic \eqref{eq:potential-asymptotic} gives
\begin{equation}\label{eq:potential-first-difference}
 |a(z+e)-a(z)|\le\frac{C}{1+|z|}
\end{equation}
for every nearest-neighbour vector $e$. Ordering any set of $R$ lattice points by their distance from a fixed
point shows that the sum of the right-hand side
of \eqref{eq:potential-first-difference} over that set is at most
$C\sqrt R$. Here the $k$-th point contributes at most $C/\sqrt k$,
because a disk of radius $r$ contains at most $C(1+r)^2$ lattice points. By definition, $\Phi_{j+1}-\Phi_j=b_j+X_{j+1}$ where
\[
 X_{j+1}:=\sum_{z\in \{S_0, \dots, S_{j-1} \}} \bigl(a(S_{j+1}-z)-a(S_j-z)\bigr).
\]
Thus $|X_{j+1}|\le C\sqrt{R_j}$. Moreover, the martingale increment is
\[
\mathcal M_{j+1}-\mathcal M_j=\Phi_{j+1}-\Phi_j-\EE[\Phi_{j+1}-\Phi_j\mid\FF_j]=X_{j+1}-\EE[X_{j+1}\mid\FF_j]
\]
 In particular,
\begin{equation}\label{eq:range-martingale-increments}
 \begin{aligned}
 |\mathcal M_{j+1}-\mathcal M_j|&\le C\sqrt{R_j},\\
 \EE\bigl[(\mathcal M_{j+1}-\mathcal M_j)^2\mid\FF_j\bigr]
 &=\operatorname{Var}(X_{j+1}\mid\FF_j)\le CR_j.
 \end{aligned}
\end{equation}
The first increment is zero, with the convention $R_0=0$.
Orthogonality of martingale increments and
Proposition \ref{prop:occupation} now give
\[
 \EE\mathcal M_n^2
 \le C\sum_{j<n}\EE R_j
 \le \frac{C n^2}{\log(n+2)}.
\]
Moreover, \eqref{eq:quadratic-bound} implies $|Q_n|\le R_n$.
Using Proposition \ref{prop:occupation} once more in
\eqref{eq:asymptotic-martingale}, we obtain
\begin{equation}\label{eq:potential-normalization}
 \EE|\Phi_n-n|\le \EE |\mathcal M_n| +\frac{1}{2} \EE |Q_n| \le \frac{Cn}{\sqrt{\log(n+2)}}.
\end{equation}

To compare $\Phi_n$ with $R_n\log n/\pi$, the packing estimate
supplies a lower bound on the potential, while a maximal displacement
estimate supplies an upper bound. By \eqref{eq:packing},
\begin{equation}\label{eq:potential-comparison-pathwise}
 \left(\frac{R_n}{\pi}\log n-\Phi_n\right)_+
 \le\frac{R_n}{\pi}\log\frac{n}{R_n}+CR_n.
\end{equation}
Write $r_n=\EE R_n$. Since $x\mapsto x\log(n/x)$ is concave,
Jensen's inequality bounds the expectation of the right-hand side from above by $r_n\log(n/r_n)/\pi+Cr_n$.
Proposition \ref{prop:occupation} gives $r_n\le Cn/\log (n+2)$, which is less than $n/e$ for all large $n$. Observe that the function $x\mapsto x\log(n/x)$ is increasing on $[0,n/e]$. It follows that
\begin{equation}\label{eq:potential-comparison-lower}
 \EE\left[\left(\frac{R_n}{\pi}\log n-\Phi_n\right)_+\right]
 \le C\frac{n\log\log n}{\log n}.
\end{equation}

For the other side, both coordinates of the walk $S$ are martingales with
increments bounded by one. The maximal Azuma inequality (see e.g. \cite[Theorem 3.2.1]{MR5088005}) therefore yields
\begin{equation}\label{eq:range-maximal-displacement}
 \PP\left(\max_{j\le n}|S_j|>\sqrt n\log n\right)
 \le C\exp\{-c(\log n)^2\}.
\end{equation}
On the complementary event, every source in $\Phi_n$ lies within
distance $2\sqrt n\log n$ of $S_n$. By
\eqref{eq:potential-asymptotic},
\begin{equation}
    \label{PhinupRnAzuma}
    \Phi_n\le R_n\left(\frac1\pi\log n
                  +\frac2\pi\log\log n+C\right). 
\end{equation}
On the exceptional event, the deterministic bound
$\Phi_n\le Cn\log(n+2)$ suffices since its contribution to the expectation
is $o(1)$. Thus
\[
 \EE\left[\left(\Phi_n-\frac{R_n}{\pi}\log n\right)_+\right]
 \le C\frac{n\log\log n}{\log n}+o(1).
\]
Together with \eqref{eq:potential-comparison-lower}, this proves
\begin{equation}\label{eq:potential-comparison-L1}
 \EE\left|\Phi_n-\frac{R_n}{\pi}\log n\right|
 \le C\frac{n\log\log n}{\log n}
\end{equation}
for all sufficiently large $n$. Combining this bound with
\eqref{eq:potential-normalization} proves convergence in $L^1$
and the first estimate in \eqref{eq:range-rates}.
For the second estimate, observe that
\[
\left|\frac{\log n}{n}\EE R_n-\pi\right| \le \frac{\pi}{n}\EE\left|\frac{R_n}{\pi}\log n-\Phi_n\right| + \frac{\pi}{n} |\EE \Phi_n -n| \le C\frac{\log\log n}{\log n},
\]
where we took expectations in \eqref{eq:asymptotic-martingale} to obtain
\[
 |\EE\Phi_n-n|=\frac12|\EE Q_n|
 \le\frac12\EE R_n\le\frac{Cn}{\log(n+2)}.
\]
Lemma \ref{lem:occupation-moments} gives, for every integer $k\ge1$,
\[
 \sup_{n\ge2}\EE\left(\frac{\log n}{n}R_n\right)^k<\infty.
\]
Choosing $k>p$ makes the $p$-th powers uniformly integrable.
The convergence in probability already proved therefore yields
convergence in $L^p$ for every $1\le p<\infty$.

It remains to prove almost sure convergence. By
\eqref{eq:range-martingale-increments}, the sum of squared
martingale increments satisfies
\[
 [\mathcal M]_n
 :=\sum_{j<n}(\mathcal M_{j+1}-\mathcal M_j)^2
 \le C\sum_{j<n}R_j\le CnR_n.
\]
The maximal Burkholder--Davis--Gundy inequality (see \cite[Theorem 1.1]{MR400380}), followed by
Lemma \ref{lem:occupation-moments} with $k=2$, gives
\[
 \EE\max_{j\le n}|\mathcal M_j|^4
 \le C\EE[\mathcal M]_n^2
 \le Cn^2\EE R_n^2
 \le\frac{Cn^4}{\log^2(n+2)}.
\]
For each $\varepsilon>0$, the resulting bounds on
\[
 \PP\left(\max_{j\le2^k}|\mathcal M_j|>
                  \varepsilon2^k\right)\le\frac{C}{\varepsilon^4 k^2}
\]
are summable in $k$. Borel--Cantelli and
$2^{k-1}<n\le2^k$ show that $\mathcal M_n/n\to0$ almost surely.

Since $|Q_n|\le R_n\le n$, the decomposition
\eqref{eq:asymptotic-martingale} first gives $\Phi_n=O(n)$
almost surely. The packing bound then implies
$R_n=O(n/\log n)$ almost surely: for $R_n>\sqrt n$, it gives
\[
 \Phi_n\ge R_n\left(\frac1{2\pi}\log n-C\right),
\]
and $R_n\le\sqrt n$ already has the required bound.
Consequently $Q_n/n\to0$, and
$\Phi_n/n\to1$ almost surely.
The probabilities in \eqref{eq:range-maximal-displacement} are
summable over all $n$. Borel--Cantelli therefore gives
\[
 \max_{j\le n}|S_j|\le\sqrt n\log n
 \quad\text{for all sufficiently large }n,
 \qquad\text{a.s.}.
\]
On this event, the upper potential bound \eqref{PhinupRnAzuma} together with
$R_n=O(n/\log n)$, show that
\[
 \left(\Phi_n-\frac{R_n}{\pi}\log n\right)_+=o(n)
 \qquad\text{a.s.}.
\]
For the lower comparison, use \eqref{eq:potential-comparison-pathwise}, the monotonicity of $x\log(n/x)$ on $[0,n/e]$, and $R_n=O(n/\log n)$. This proves the almost sure range limit
and completes the proof.
\end{proof}

\begin{remark}\label{rem:constant-first-cookie}
The same range limit and the estimates in \eqref{eq:range-rates}
hold when the first cookie at every vertex has a common strength $\theta\in[-1,1]$ and all later cookies are zero. To see this, let $I_j$ indicate that $S_j$ is fresh and use unit mass on the
visited set,
\[
 \nu_n=\mathbf1_{\{S_0,\ldots,S_{n-1}\}},\qquad
 \Phi_n^\nu=\sum_z\nu_n(z)a(S_n-z),\qquad
 Q_n^\nu=\sum_{z,w}\nu_n(z)\nu_n(w)d(z-w).
\]
The measure $\nu_n$ assigns unit mass to each visited vertex,
independently of $\theta$. Since $b_j=\theta I_j$ and
$\nu_{j+1}=\nu_j+I_j\delta_{S_j}$,
\[
 \EE[\Phi_{j+1}^\nu-\Phi_j^\nu\mid\FF_j]
 =1+\theta I_j(d*\nu_j)(S_j),\qquad
 Q_{j+1}^\nu-Q_j^\nu=2I_j(d*\nu_j)(S_j).
\]
Thus $\Phi_n^\nu-n-(\theta/2)Q_n^\nu$ is a martingale. The remaining
estimates are unchanged, uniformly over $\theta\in[-1,1]$, since these environments satisfy $A=\theta_+$, $B=\theta_-$ and
$A+B\le1$.
\end{remark}

\section{Recurrence}\label{sec:recurrence}

Write $\kappa=A+B$ and let $\Lambda=\Lambda(A,B)$ be the
coefficient in \eqref{eq:leading-excitation}, with $AB<1$.
We compare the negative ordinary drift of $\log\log|z|$ with
the excitation drift, first on local blocks and then up to exit
from a large ball. Related potential and martingale arguments are used
by Chan \cite{MR4575009} for horizontal-vertical walks.

Fix $r_0=e^3$. Let $\psi:\RR^2\to[1,\infty)$ be a smooth radial function with $\psi(z)=\log \log |z|$ for $|z|\geq r_0$.
 Put $w(z)=|z|^{-2}(\log|z|)^{-2}$ for $|z|\ge r_0$, and $w(z)=0$
otherwise. 

\begin{lemma}\label{lem:loglog}
As $|z|=r\to\infty$, uniformly in the argument $\vartheta$ of $z$,
\begin{equation}\label{eq:loglog-drift}
 \begin{aligned}
 (P-I)\psi(z)&=-\tfrac14w(z)+o(w(z)),\\
 D\psi(z)&=-\frac{\cos(2\vartheta)}{2r^2\log r}+O(w(z)).
 \end{aligned}
\end{equation}
Moreover,
\begin{equation}\label{eq:loglog-budget}
 \sum_{|z|<R}(D\psi(z))_\pm=\log\log R+O(1).
\end{equation}
\end{lemma}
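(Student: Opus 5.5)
The plan is to Taylor-expand the radial function $\psi(z)=f(|z|)$ with $f(r)=\log\log r$, and to read off both drifts from its Hessian; the budget estimate then reduces to a polar integral.

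\textbf{Radial derivatives.} First compute
\[
f'(r)=\frac{1}{r\log r},\qquad
f''(r)=-\frac{1}{r^2\log r}\Bigl(1+\frac1{\log r}\Bigr).
\]
Iterating shows that for $|z|$ large every partial derivative of order $k\le 4$ of $\psi$ is bounded by $C_k r^{-k}(\log r)^{-1}$, uniformly in the angle. For a radial function the Hessian has eigenvalue $f''$ in the radial direction and $f'/r$ in the tangential direction. Hence
\[
\Delta\psi=f''+\frac{f'}{r}=-\frac1{r^2\log^2 r}=-w(z),
\]
and
\[
(\partial_x^2-\partial_y^2)\psi=\Bigl(f''-\frac{f'}r\Bigr)\cos(2\vartheta)
=-\Bigl(\frac{2}{r^2\log r}+\frac1{r^2\log^2r}\Bigr)\cos(2\vartheta).
\]

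\textbf{Discrete operators.} For each coordinate direction $e_i$, use Taylor's formula with integral remainder to fourth order. The odd-order terms cancel in the symmetric second difference, so
\[
\tfrac12\bigl(\psi(z+e_i)+\psi(z-e_i)\bigr)-\psi(z)=\tfrac12\partial_i^2\psi(z)+O\bigl(r^{-4}(\log r)^{-1}\bigr).
\]
The remainder bound uses the derivative estimate on the segment $|z+te_i|\ge r/2$; this is why the statement is only claimed as $r\to\infty$. Averaging the two directions gives $(P-I)\psi=\tfrac14\Delta\psi+O(r^{-4}/\log r)$, and $r^{-4}/\log r=o(w)$, which is the first line of \eqref{eq:loglog-drift}. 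Since $D=\tfrac12[(P_h-I)-(P_v-I)]$, we get
\[
D\psi=\tfrac14(\partial_x^2-\partial_y^2)\psi+O\bigl(r^{-4}/\log r\bigr)
=-\frac{\cos 2\vartheta}{2r^2\log r}+O(w),
\]
which is the second line. Uniformity in $\vartheta$ holds because all the bounds are radial.

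\textbf{Budget \eqref{eq:loglog-budget}.} Split the sum at a large fixed radius $r_1$. The finitely many points with $|z|<r_1$ contribute $O(1)$. For $|z|\ge r_1$, write $D\psi=g+O(w)$ with $g(z)=-\cos(2\vartheta)/(2r^2\log r)$. Then $|(D\psi)_\pm-g_\pm|\le Cw$, and $\sum_z w(z)\le C\int_{r_0}^\infty \frac{dr}{r\log^2 r}<\infty$, so the error is $O(1)$. To compare $\sum_{r_1\le|z|<R}g_\pm(z)$ with $\int g_\pm$, note that $g_\pm$ is Lipschitz on unit squares with constant $C r^{-3}/\log r$, which is summable; boundary squares near $|z|=r_1$ and $|z|=R$ cost $O(1)$, exactly as in the proof of \eqref{eq:disk-rows}. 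Finally, the angular integral of $(\cos2\vartheta)_\pm$ equals $2$, so
\[
\int_{r_1\le|z|<R}g_\pm\,dz=\int_{r_1}^R\frac{2}{2r^2\log r}\,r\,dr=\log\log R+O(1).
\]

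\textbf{Main obstacle.} The step needing most care is the uniform fourth-order remainder bound: one must check that the fourth derivatives of $\log\log|z|$ are $O(r^{-4}/\log r)$ and not merely $O(r^{-4})$. Otherwise the error in $(P-I)\psi$ would be only $O(r^{-4})$, which still is $o(w)$; so even a cruder bound suffices. The rest is bookkeeping.
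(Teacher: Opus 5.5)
Your proposal is correct and takes essentially the same route as the paper: you expand the symmetric second differences to get $(P-I)\psi=\frac14\Delta\psi+O(r^{-4}/\log r)$ and $D\psi=\frac14(\partial_1^2-\partial_2^2)\psi+O(r^{-4}/\log r)$, and you evaluate these with the radial formulas. You then obtain the budget by comparing the lattice sum of the leading term $-\cos(2\vartheta)/(2r^2\log r)$ with its polar integral, and the $O(w)$ remainder and the $O(r^{-3}/\log r)$ Lipschitz errors are both summable; as you observe yourself, the fourth-derivative bound is not really an obstacle, since even an $O(r^{-4})$ remainder is $o(w)$.
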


\begin{proof}
For $r$ sufficiently large, the radial Laplacian gives
\[
 \Delta\psi(z)
 =\frac{d^2}{dr^2}\log\log r
   +\frac1r\frac{d}{dr}\log\log r
 =-\frac1{r^2\log^2r}.
\]
For each coordinate direction $i=1,2$, Taylor's formula gives
\[
 \psi(z+e_i)-2\psi(z)+\psi(z-e_i)
 =\partial_i^2\psi(z)+O(r^{-4}/\log r).
\]
Taking one quarter of the sum and difference of these two
expressions gives, respectively,
\[
 \begin{aligned}
 (P-I)\psi(z)
 &=\tfrac14\Delta\psi(z)+O(r^{-4}/\log r),\\
 D\psi(z)
 &=\tfrac14(\partial_1^2-\partial_2^2)\psi(z)+O(r^{-4}/\log r).
 \end{aligned}
\]
For $z=(r\cos\vartheta,r\sin\vartheta)$, the second identity becomes
\[
 D\psi(z)
 =-\frac{\cos(2\vartheta)}{2r^2\log r}
  -\frac{\cos(2\vartheta)}{4r^2\log^2r}
  +O(r^{-4}/\log r).
\]
This proves \eqref{eq:loglog-drift}. The difference between
$D\psi(z)$ and $-\cos(2\vartheta)/(2r^2\log r)$ is $O(w(z))$
and is summable over the lattice, since $\sum_z w(z)<\infty$.
The angular integral of each of $(\cos(2\vartheta))_+$ and
$(\cos(2\vartheta))_-$ equals $2$, so the integral of either
sign of the leading term is
$\int_{r_0}^Rdr/(r\log r)=\log\log R+O(1)$.
Replacing the integral by a lattice sum costs a bounded amount:
the variation on a unit square at distance $r$ is at most
$C/(r^3\log r)$, and these errors are summable. This proves
\eqref{eq:loglog-budget}.
\end{proof}

Write $W_n=\sum_{k<n}w(S_k)$ and we refer to $W$ as the clock.

\begin{lemma}\label{lem:local-drift}
Fix $0<c<1/4$ and $J>\Lambda/4$. There exist
$\epsilon,\delta\in(0,1/2)$ and $r_1,C<\infty$, depending only
on $A,B,c,J$ and $\psi$, such that the following holds uniformly
over all remaining environments. From $x$ with $r=|x|\ge r_1$, put
\[
 m=\lfloor\epsilon\delta^2r^2\rfloor,\qquad
 \zeta=\inf\{n\ge0:|S_n-x|\ge\delta r\},\qquad
 \sigma=m\wedge\zeta.
\]
We call the path up to $\sigma$ a block (from $x$). Then $m\ge1$, and for every stopping time $\nu\le\sigma$,
\begin{equation}\label{eq:local-stopped-drift}
 \EE_x[\psi(S_\nu)-\psi(x)]
 \le-c\EE_xW_\nu+J\EE_xW_\sigma,
 \qquad W_\sigma\le\frac C{\log^2r}.
\end{equation}
The same estimates hold conditionally at the start of each block: for any stopping time $T$ with $|S_T|\geq r_1$, \eqref{eq:local-stopped-drift} holds with $x=S_T$ and the environment conditioned on $\FF_T$.
\end{lemma}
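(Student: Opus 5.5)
The plan is to decompose $\psi(S_\nu)-\psi(x)$ with the drift identity from Section \ref{sec:potential}. Since the one-step operator with strength $b$ is $P+bD$, optional stopping at the bounded stopping time $\nu\le m$ gives
\[
 \EE_x[\psi(S_\nu)-\psi(x)]=\EE_x\sum_{k<\nu}\bigl((P-I)\psi(S_k)+b_kD\psi(S_k)\bigr).
\]

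\emph{Choice of parameters.} Inside the block $|S_k-x|<\delta r$ for $k<\nu$, so $|S_k|\in[(1-\delta)r,(1+\delta)r]$ and $w(S_k)=w(x)(1+O(\delta))$. By Lemma \ref{lem:loglog}, choosing $\delta$ small and $r_1$ large makes the ordinary drift satisfy $(P-I)\psi(S_k)\le -c\,w(S_k)$, because $c<1/4$. The block has $\sigma\le m\le\epsilon\delta^2r^2$ steps. Hence
\[
 W_\sigma\le m\sup_{|z-x|<\delta r}w(z)\le C\epsilon\delta^2/\log^2 r,
\]
which is the second claim in \eqref{eq:local-stopped-drift}. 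For $r_1$ large we also have $m\ge1$.

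\emph{The excitation term.} By Lemma \ref{lem:loglog}, within the block
\[
 |D\psi(S_k)|\le \frac{1}{2r^2\log r}(1+O(\delta))+O(w(x)).
\]
Therefore
\[
 \EE_x\sum_{k<\nu}|b_k||D\psi(S_k)|\le\Bigl(\frac{1+O(\delta)}{2r^2\log r}+Cw(x)\Bigr)\EE_x\Xi_\sigma.
\]
We bound $\EE_x\Xi_\sigma\le\EE_x\Xi_m$ using \eqref{eq:leading-excitation}, which holds uniformly over remaining environments:
\[
 \EE_x\Xi_m\le(\Lambda+o(1))\frac{m}{\log m}.
\]
Here $\log m=2\log r+O_{\epsilon,\delta}(1)$, so $m/\log m\le (1+o(1))\,m/(2\log r)$. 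The excitation contribution is then at most
\[
 (\Lambda/4+o(1)+O(\delta))\,\frac{m}{r^2\log^2r}\approx(\Lambda/4+O(\delta))\,m\,w(x).
\]

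\emph{Main obstacle.} The difficulty is converting $m\,w(x)$ into $\EE_xW_\sigma$. This requires the block not to stop too early: we need $\EE_x\sigma\ge(1-O(\delta))m$. Since $|S_n-x|^2-n$ is a martingale, Doob's $L^2$ maximal inequality gives
\[
 \PP_x(\zeta<m)\le \frac{4m}{\delta^2r^2}\le 4\epsilon.
\]
Hence $\EE_x\sigma\ge m(1-4\epsilon)$, and
\[
 \EE_xW_\sigma\ge(1-4\epsilon)(1-O(\delta))\,m\,w(x).
\]
Choose $\epsilon$ and $\delta$ small enough that
\[
 \frac{\Lambda/4+O(\delta)}{(1-4\epsilon)(1-O(\delta))}\le J,
\]
which is possible because $J>\Lambda/4$. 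Then take $r_1$ large to absorb the $o(1)$ terms. This bounds the excitation contribution by $J\,\EE_xW_\sigma$. Together with the ordinary drift this gives \eqref{eq:local-stopped-drift}; when $\Lambda=0$ the argument is the same with any $J>0$.

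\emph{Conditional version.} This follows exactly as in Proposition \ref{prop:occupation}. Delete the used cookies and translate; the remaining environment stays in $\cE_{A,B}$. All constants above are uniform over environments and starting points, so \eqref{eq:local-stopped-drift} holds with $x=S_T$ conditionally on $\FF_T$.
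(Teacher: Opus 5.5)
Your proposal is correct and follows the paper's proof essentially step for step. It uses the same optional stopping of the $\psi$-martingale, the same bound of the excitation term via \eqref{eq:leading-excitation} with $\log m\sim 2\log r$, the same lower bound on $\EE_xW_\sigma$ from $\EE_x\sigma\ge(1-O(\epsilon))m$, and the same conditional argument from uniformity over remaining environments. The only difference is cosmetic: you bound $\PP_x(\zeta<m)$ with Doob's $L^2$ maximal inequality, losing a factor $4$, whereas the paper stops $|S_n-x|^2-n$ at $\sigma$ and applies Markov's inequality to get $\PP_x(\zeta\le m)\le\epsilon$.
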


\begin{proof}
Choose $\epsilon,\delta$ sufficiently small that
\[
 \frac{\Lambda}{4(1-\epsilon)}
       \left(\frac{1+\delta}{1-\delta}\right)^2<J.
\]
Applying the optional stopping theorem to the martingale $(|S_n-x|^2-n)$ at $\sigma$ and Markov inequality give
$\PP_x(\zeta\le m)\le m/(\delta r)^2\le\epsilon$, and hence
$\EE_x\sigma\ge(1-\epsilon)m$.
Before $\sigma$, the norm of the walk lies between $(1-\delta)r$ and
$(1+\delta)r$. Consequently,
\[
 \EE_xW_\sigma
 \ge\left((1+\delta)^{-2}+o(1)\right)
       \frac{(1-\epsilon)m}{r^2\log^2r}.
\]
Since $\sigma\le m$, the excitation term satisfies
\[
 \EE_x\sum_{k<\sigma}|b_kD\psi(S_k)|
 \le\sup_{|z-x|<\delta r}|D\psi(z)|\,\EE_x\Xi_m.
\]
We can therefore use the deterministic-time estimate
\eqref{eq:leading-excitation}, which gives
$\EE_x\Xi_m\le(\Lambda+o(1))m/\log m$ for large $m$ uniformly over the
environment. Since $\log m\sim2\log r$, Lemma
\ref{lem:loglog} and the bound on $\EE_xW_\sigma$ yield
\[
 \EE_x\sum_{k<\sigma}|b_kD\psi(S_k)| \le\frac{\Lambda+o(1)}{4(1-\delta)^2}
               \frac{m}{r^2\log^2r} \le\left\{
     \frac{\Lambda}{4(1-\epsilon)}
       \left(\frac{1+\delta}{1-\delta}\right)^2+o(1)
     \right\}\EE_xW_\sigma.
\]
The errors are uniform over centers and remaining environments.
Choose $r_1\geq r_0/(1-\delta)$ large enough such that for all $r\geq r_1$, one has $m\ge1$ and the last coefficient at most $J$, and $(P-I)\psi\le-cw$ throughout
the ball $B(x,\delta r)$ for any $x$ with $|x|=r$. Applying the optional stopping theorem to the martingale
\begin{equation}
    \label{MartpsiPbD}
    M_n=\psi(S_n)-\psi(x) - \sum_{k<n}(P-I+b_kD)\psi(S_k),
\end{equation}
and using $\nu\le\sigma\le m$, we have
\[
 \begin{aligned}
 \EE_x[\psi(S_\nu)-\psi(x)]
 &=\EE_x\sum_{k<\nu}\bigl((P-I)\psi(S_k)+b_kD\psi(S_k)\bigr)\\
 &\le-c\EE_xW_\nu
       +\EE_x\sum_{k<\sigma}|b_kD\psi(S_k)|.
 \end{aligned}
\]
The preceding estimate proves the first inequality in
\eqref{eq:local-stopped-drift}.
The second follows from $\sigma\le m$ and
$\sup_{|z-x|<\delta r}w(z)\le C/(r^2\log^2r)$.
At a block starting at a stopping time $T$, the time limit $m$
is $\FF_T$-measurable. Thus the conditional assertion follows from the uniformity
of \eqref{eq:leading-excitation} since remaining stacks still satisfy
the same budgets.
\end{proof}

\begin{proof}[Proof of Theorem \ref{thm:main}]
Assume $\kappa<1+1/(2\pi+1)$, so that
$AB\le\kappa^2/4<1$. We first choose $c,J$ as in Lemma
\ref{lem:local-drift}, with $J>c$, such that
\begin{equation}\label{eq:recurrence-criterion}
 q=\rho\kappa<1,\qquad \rho=1-\frac cJ.
\end{equation}
For $\kappa<1$, any admissible $c,J$ with $J>c$ work. For $\kappa\ge1$,
\begin{equation}
    \label{eqcondreckaLa}
     \Lambda\le\frac{2\pi\kappa}{2-\kappa},\qquad
 \kappa\left(1-\frac1\Lambda\right)
 \le\kappa-\frac{2-\kappa}{2\pi}<1.
\end{equation}
Here $\Lambda\ge\pi\kappa>1$, so taking $c$ sufficiently close
to $1/4$ from below and $J$ sufficiently close to $\Lambda/4$
from above gives \eqref{eq:recurrence-criterion}.

Fix the constants of Lemma \ref{lem:local-drift}, and increase
$r_1$ so that $\psi(z)=\log\log|z|$ and $(P-I)\psi(z) \leq -c w(z)$ for $|z|\ge r_1$.
For $r_1\le|x|<R$, let
\[
 T_{r_1}=\inf\{n\ge0:S_n\in B_{r_1}\},\qquad
 \tau_R=\inf\{n\ge0:|S_n|\ge R\},\qquad
 \tau=T_{r_1}\wedge\tau_R.
\]
Stopping $|S_n|^2-n$ first at $n\wedge\tau_R$ gives
$\EE_x\tau\le\EE_x\tau_R\le(R+1)^2$. All stopped drift
sums below are therefore integrable. Put
$D_\tau=\EE_x\psi(S_\tau)-\psi(x)$.

The signed excitation consumed at a vertex before $\tau$ lies
in $[-B,A]$, and all sources lie in $B_R$. Consequently,
\[
 \sum_{k<\tau}b_kD\psi(S_k)
 \le A\sum_{|z|<R}(D\psi(z))_+
       +B\sum_{|z|<R}(D\psi(z))_-.
\]
Again, we apply the optional stopping theorem to the martingale in \eqref{MartpsiPbD} and use \eqref{eq:loglog-budget} to obtain
\begin{equation}\label{eq:global-drift}
 D_\tau\le\kappa\log\log R-c\EE_xW_\tau+C,
\end{equation}
where we also used that $(P-I)\psi(z) \leq -c w(z)$ for $|z|\ge r_1$. We note that all constants in this proof are independent of $x,R$ and the remaining environment.

For a second bound, start at $T_0=0$ and concatenate the blocks of Lemma \ref{lem:local-drift} until time $\tau$. More precisely, if $T_j<\tau$ (in particular, $|S_{T_j}|\ge r_1$), define
the next block from $S_{T_j}$ as follows. Define
$$
 m_j=\lfloor\epsilon\delta^2|S_{T_j}|^2\rfloor,
$$
let $\zeta_j$ be the first time $n\ge T_j$ with
$|S_n-S_{T_j}|\ge\delta|S_{T_j}|$, and put
$$
 T_{j+1}=T_j+\bigl(m_j\wedge(\zeta_j-T_j)\bigr).
$$
Let $N$ be the first index with $T_N\ge\tau$, and freeze
$T_j=T_N$ for $j\ge N$. Each active block has length at least
one, so $N\le\tau<\infty$. Its activity event
$\{j<N\}=\{T_j<\tau\}$ is measurable at $T_j$.
On this event, apply \eqref{eq:local-stopped-drift} conditionally
at $T_j$, with both clocks counted from $T_j$ and the observed
part of the block stopped at $T_{j+1}\wedge\tau$. This gives
\begin{equation}
    \label{eq:psiWubd}
  \EE_x\bigl[\psi(S_{T_{j+1}\wedge\tau})-\psi(S_{T_j})
                   \mid\FF_{T_j}\bigr]\le-c\EE_x\bigl[W_{T_{j+1}\wedge\tau}-W_{T_j}
                   \mid\FF_{T_j}\bigr]
       +J\EE_x\bigl[W_{T_{j+1}}-W_{T_j}
                   \mid\FF_{T_j}\bigr].   
\end{equation}
The negative drift is counted only up to $\tau$, while the
positive term retains the full block.
Only the last full block can extend beyond $\tau$, and its
entire clock is bounded by $C/\log^2|S_{T_{N-1}}|\le C$.
Thus, pathwise, $0\le W_{T_N}-W_\tau\le C$.
Multiplying the conditional inequalities by $\one_{\{j<N\}}$,
taking expectations, and summing gives
\begin{equation}\label{eq:local-drift-summed}
 D_\tau\le-c\EE_xW_\tau+J\EE_xW_{T_N}
          \le(J-c)\EE_xW_\tau+C,
\end{equation}
where in the first inequality, we first summed over $j \leq K$ in \eqref{eq:psiWubd} and let $K \to \infty$ using the bounded and monotone convergence theorems. Multiply \eqref{eq:local-drift-summed} by $c/J$ and \eqref{eq:global-drift} by $1-c/J$, and add. The clock terms cancel, leaving
\begin{equation}\label{eq:annular-expectation}
 \EE_x\psi(S_\tau)\le\psi(x)+q\log\log R+C.
\end{equation}
Since $\psi\ge0$ and $\psi(S_\tau)\ge\log\log R$ on the event $\{\tau=\tau_R\}$,
\[
 \PP_x(\tau_R<T_{r_1})
 \le q+\frac{\psi(x)+C}{\log\log R}.
\]
Note that each $\tau_R$ is almost surely finite as $\EE_x\tau_R\le(R+1)^2$. Letting $R\to\infty$ gives $\PP_x(T_{r_1}=\infty)\le q<1$.

This bound holds for every starting point outside $B_{r_1}$ and
every remaining environment. For $E=\{T_{r_1}=\infty\}$,
conditioning at time $n$ therefore gives
\[
 \PP_x(E\mid\FF_n)\le q\one_{\{T_{r_1}>n\}}.
\]
By L\'evy's upward theorem, as $n\to \infty$, the left-hand side converges a.s. to $\one_E$, so $\PP_x(E)=0$. Applying this conclusion after every deterministic time shows that $B_{r_1}$ is visited
infinitely often almost surely.

We have proved $\PP(\cup_{z\in B_{r_1}} \{z\text{ is visited i.o.}\})=1$. On the event $\{z\text{ is visited i.o.}\}$, absolute summability of the stacks at $z$ implies that the strengths used on its successive departures from $z$ tend to zero. Each of the four departure probabilities is consequently at least $1/8$ from some visit onward. Conditional Borel--Cantelli lemma gives infinitely many jumps from $z$ to each of its neighbors. Since $\ZZ^2$ is connected, this implies that every vertex is visited i.o. a.s. on $\{z\text{ is visited i.o.}\}$, which completes the proof.
\end{proof}

\begin{remark}\label{rem:one-sign-recurrence}
The condition $AB<1$ is sufficient for the excitation bound in Proposition \ref{prop:occupation}.
As shown in the proof (see especially \eqref{eqcondreckaLa}), the following stronger condition is sufficient for recurrence: 
\begin{equation}
    \label{eqcondrecnkaL}
    (A+B) \left(1-\frac{1}{\Lambda}\right) <1.
\end{equation}
Consequently, since $\Lambda(A,0)=\pi A$, for cookies of one sign the proof gives the larger parameter window $A<1+1/\pi$ when $B=0$, and symmetrically $B<1+1/\pi$ when $A=0$.
\end{remark}

Write $C_z(\ell)=\sum_{j=1}^{\ell}\beta_{z,j}$ for the signed partial sum of the stack at $z$, with $C_z(0)=0$. The following example shows that the  threshold $\kappa_*$ is not optimal.

\begin{example}\label{example:canceling-two-cookies}
 The stack
$(1,-\theta,0,\ldots)$ at every vertex is recurrent for every
$0<\theta<1$. Here $A=1$, $B=\theta$, so $AB<1$ and the local estimate
still holds, while every partial sum $C_z(\ell)$ lies in
$[0,1]$. In the fresh case, grouping the excitation drift by
departure vertex gives
$\sum_{k<\tau}b_kD\psi(S_k)=\sum_z\beta_zD\psi(z)$ with $\tau=T_{r_1}\wedge\tau_R$ as in the proof of Theorem \ref{thm:main}, where $\beta_z\in\{0,1,1-\theta\}\subset[0,1]$ is the signed excitation
consumed at $z$. Since $\beta_z\ge0$, the terms with
$D\psi(z)<0$ are nonpositive and may be dropped, so the sum is
at most $\sum_z(D\psi(z))_+=\log\log R+O(1)$; thus the
coefficient $\kappa$ in \eqref{eq:global-drift} can be replaced
by $1$.
After a finite history, if $\ell_z$ cookies have been used at
$z$ and $N$ further departures from $z$ occur, the net
consumption at $z$ is $C_z(\ell_z+N)-C_z(\ell_z)$. The first
term is again bounded as above, since
$C_z(\ell_z+N)\in[0,1]$, while the subtracted term
$-\sum_z C_z(\ell_z)D\psi(z)$ has absolute value at most the
finite constant $\sum_{z:\ell_z>0}|C_z(\ell_z)||D\psi(z)|$,
which is finite and independent of $R$ because only finitely
many $\ell_z$ are nonzero. Choosing any $0<c<1/4$ and
$J>\max\{c,\Lambda/4\}$ as in the proof above then gives a
limiting avoidance probability at most $1-c/J<1$; the additive constant $C$ in \eqref{eq:global-drift} may depend on the history, but this limiting avoidance
bound does not. The conditional-probability and
neighbor-propagation arguments above complete the proof.
\end{example}

\section{Extensions}\label{sec:extensions}

We give two further examples of recurrence beyond the assumptions of Theorem \ref{thm:main}. Recall that $C_z(\ell)=\sum_{j=1}^{\ell}\beta_{z,j}$, with $C_z(0)=0$.

\begin{proposition}\label{prop:bounded-prefix}
Suppose that there are $u,v\ge0$ with $u+v<1$ such that
\[
 -v\le C_z(\ell)\le u
 \quad\text{for every }z\in\ZZ^2\text{ and }\ell\ge0.
\]
Then the walk almost surely visits every vertex infinitely often,
even if $\sum_j|\beta_{z,j}|=\infty$.
\end{proposition}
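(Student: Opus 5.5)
The plan is to repeat the global half of the proof of Theorem \ref{thm:main}, but to bound the excitation drift through the signed partial sums $C_z(\ell)$ rather than the budgets \eqref{eq:budgets}. Then the local blocks of Lemma \ref{lem:local-drift} are not needed, and neither is Proposition \ref{prop:occupation}. Those tools required $\sum_j|\beta_{z,j}|<\infty$, and this hypothesis may fail here.

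\textbf{Setup.} Fix a finite history, i.e.\ condition on $\FF_n$. Let $\ell_z$ be the number of cookies already used at $z$; only finitely many $\ell_z$ are nonzero. Take $\psi$, $r_1$, $T_{r_1}$, $\tau_R$ and $\tau=T_{r_1}\wedge\tau_R$ as in Section~\ref{sec:recurrence}, with any fixed $0<c<1/4$. The walk is still centered with unit steps, so $|S_n|^2-n$ is a martingale and $\EE\tau\le(R+1)^2$. Apply optional stopping to the martingale \eqref{MartpsiPbD}. Its increments are bounded before $\tau$, since $\psi$ and $|D\psi|$ are bounded on $B_{R+1}$. This gives
\[
D_\tau\le -c\,\EE W_\tau+\EE\sum_{k<\tau}b_kD\psi(S_k)+C.
\]

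\textbf{Grouping the excitation drift.} Group the excitation sum pathwise by departure vertex, exactly as in Example~\ref{example:canceling-two-cookies}. If $N_z$ further departures from $z$ occur before $\tau$, then
\[
\sum_{k<\tau}b_kD\psi(S_k)=\sum_{|z|<R}\bigl(C_z(\ell_z+N_z)-C_z(\ell_z)\bigr)D\psi(z).
\]
This is a finite sum, because $\tau<\infty$ a.s.\ and all sources lie in $B_R$.

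\textbf{Bounding the two terms.} For the first term, $C_z(\ell_z+N_z)\in[-v,u]$ gives the bound
\[
u\sum_{|z|<R}(D\psi(z))_++v\sum_{|z|<R}(D\psi(z))_-.
\]
By \eqref{eq:loglog-budget} this equals $(u+v)\log\log R+O(1)$. The subtracted term is bounded in absolute value by $\sum_{z:\ell_z>0}|C_z(\ell_z)||D\psi(z)|$. This is a finite constant depending on the history but not on $R$. Hence
\[
\EE\psi(S_\tau)\le\psi(x)+(u+v)\log\log R+C_{\rm hist},
\]
and the clock term can simply be dropped since it is nonpositive. Consequently
\[
\PP(\tau_R<T_{r_1}\mid\FF_n)\le u+v+\frac{\psi(S_n)+C_{\rm hist}}{\log\log R}.
\]
Letting $R\to\infty$ gives $\PP(T_{r_1}=\infty\mid\FF_n)\le q:=u+v<1$ on $\{T_{r_1}>n\}$. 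As in the proof of Theorem~\ref{thm:main}, L\'evy's upward theorem then shows that $B_{r_1}$ is visited infinitely often a.s. The point to check carefully is that the limit bound $q$ is uniform over histories, even though the additive constant is not. This is the same situation as in Example~\ref{example:canceling-two-cookies}, and the conditional argument only needs the uniform bound $q$.

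\textbf{Spreading to all vertices.} The original last step used $\beta_{z,j}\to0$, which may fail here. Instead, note that
\[
\beta_{z,j}=C_z(j)-C_z(j-1)\in[-(u+v),u+v],
\]
so every departure probability is at least $(1-u-v)/4>0$ at every visit. Hence the conditional Borel--Cantelli lemma gives infinitely many jumps from any vertex visited infinitely often to each of its neighbours. Connectivity of $\ZZ^2$ then finishes the proof.

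I expect the main obstacle to be the pathwise regrouping and its integrability. The sum $\sum_{k<\tau}|b_k|$ is not controlled in expectation, so optional stopping must be applied to the martingale \eqref{MartpsiPbD} itself, whose increments are bounded before $\tau$. The regrouped bound is then used pathwise, so no absolute summability is ever required.
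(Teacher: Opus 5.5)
Your argument is correct, but it takes a different route from the paper's. The paper does not use $\psi$ here. For each target vertex $x$, it applies the drift identity \eqref{EcondincaSz} to $a(\cdot-x)$, stopped at the first visit to $x$ or the exit from $B(x,R)$. Since $a(\cdot-x)$ is harmonic off $x$, the ordinary drift vanishes exactly. The same regrouping by departure sites then bounds the excitation drift by $u\sum d_+ + v\sum d_- + K_{\rm hist}$, which by \eqref{eq:disk-rows} is $\frac{2(u+v)}{\pi}\log R+C+K_{\rm hist}$. This is compared with $a\ge\frac{2}{\pi}\log R-C$ on exit. The paper thus shows directly that every fixed vertex is hit with conditional probability at least $1-u-v$ after any history, so it needs no propagation step. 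You instead use the $\log\log$ function, with \eqref{eq:loglog-budget} playing the role of \eqref{eq:disk-rows}. This gives the same ratio $u+v$, but only for hitting $B_{r_1}$. You then correctly replace the failing argument $\beta_{z,j}\to0$ by the observation $|\beta_{z,j}|=|C_z(j)-C_z(j-1)|\le u+v<1$. That gives departure probabilities at least $(1-u-v)/4$, which makes the conditional Borel--Cantelli propagation valid. The paper's route is shorter and relies only on Lemma \ref{lem:anisotropic}, and it is reused verbatim in Proposition \ref{prop:sublattice-cookies}. Yours recycles Section \ref{sec:recurrence} at the cost of the extra propagation step. One small adjustment: choose $r_1$ only so that $\psi=\log\log|\cdot|$ and $(P-I)\psi\le0$ on $\{|z|\ge r_1\}$, not via Lemma \ref{lem:local-drift}, whose constants require finite budgets with $AB<1$. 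Also, the parameter $c$ plays no role once you drop the clock term.
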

\begin{proof}
Fix $x\in\ZZ^2$ and $m\ge0$, condition on $\FF_m$, and suppose $S_m=y\ne x$. We restart the time index from $m$. If $\ell_z$ cookies have already been used at $z$, the remaining
signed partial sums are $C_z(\ell_z+k)-C_z(\ell_z)$. Only finitely
many $\ell_z$ are nonzero, so
$K_{\rm hist}=\sum_{z:\ell_z>0}|C_z(\ell_z)d(z-x)|$ is finite.
Stop at the first visit to $x$ or exit from $B(x,R)$, where
$R>|y-x|$, and denote this time by $\sigma_R^{(x)}$. It is integrable,
since $|S_n-x|^2-n$ gives an expected exit time of at most $(R+1)^2$.
Let $N_R(z)$ count departures from $z$ during this stopped
segment. The walk does not visit $x$ before $\sigma_R^{(x)}$, so the
indicator term in \eqref{EcondincaSz} vanishes. Grouping the
remaining drift terms by their departure sites gives
\[
 \sum_{k<\sigma_R^{(x)}}b_kd(S_k-x)
 =\sum_{z:N_R(z)>0}\bigl(C_z(\ell_z+N_R(z))-C_z(\ell_z)\bigr)d(z-x).
\]
The subtracted term $-\sum_{z:N_R(z)>0} C_z(\ell_z)d(z-x)$ is at most $K_{\rm hist}$. Thus the
potential-kernel drift identity \eqref{EcondincaSz} and \eqref{eq:disk-rows} give
\[
 \begin{aligned}
 \EE_y a(S_{\sigma_R^{(x)}}-x)
  &\le a(y-x)+u\sum_{|z-x|<R}d_+(z-x)+v\sum_{|z-x|<R}d_-(z-x)+K_{\rm hist}\\
 &\le a(y-x)+\frac{2(u+v)}{\pi}\log R+C+K_{\rm hist}.
 \end{aligned}
\]
On exit, $a(S_{\sigma_R^{(x)}}-x)\ge(2/\pi)\log R-C$, whereas on
hitting $x$ it vanishes. Hence
$$
 \EE_y a(S_{\sigma_R^{(x)}}-x)
 \ge\frac2\pi\log R\cdot\PP_y(\sigma_R^{(x)}\ \text{is an exit from }B(x,R))-C.
$$
Letting $R\to\infty$ shows that, after every finite history $\FF_m$, the conditional probability of never
hitting $x$ is at most $u+v<1$. For fixed $N$ and $m\ge N$, write $A_N^{(x)}:=\{ S_n\ne x \text{ for all } n\geq N\}$.
The conditional probability of $A_N^{(x)}$ given $\FF_m$ is consequently at most $u+v$. As $m\to\infty$
these probabilities converge a.s. to the indicator of that event, so $\PP(A_N^{(x)})=0$. Thus, 
\[
\PP(\exists x \text{ finitely visited}) = \PP(\bigcup_x \bigcup_N A_N^{(x)})=0,
\]
which proves the assertion.
\end{proof}

For example, the stack
$(\theta,-\theta,\theta,-\theta,\ldots)$, with $0<\theta<1$,
has signed partial sums in $[0,\theta]$ and infinite total
absolute excitation. The interval in the proposition must be
common to all vertices.

\begin{proposition}\label{prop:sublattice-cookies}
Let $k_0\ge1$ be an integer. Suppose all cookies are nonnegative,
vanish outside $k_0\ZZ^2$, and have total strength at most $K<k_0^2$
at each vertex. Then the walk almost surely visits every vertex
infinitely often.
\end{proposition}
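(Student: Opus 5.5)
We run the same potential-kernel argument as in the proof of Proposition~\ref{prop:bounded-prefix}, exploiting that sources on a sparse sublattice contribute a smaller share of the anisotropic drift. The idea is to average the budget over a $k_0\times k_0$ cell: the total excitation per unit area is at most $K/k_0^2<1$.

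\textbf{Reduction.} Fix a target $x$ and a finite history $\FF_m$ with $S_m=y\ne x$. Let $\sigma_R^{(x)}$ be the first visit to $x$ or the exit from $B(x,R)$; as before it is integrable. Since all cookies are nonnegative, \eqref{EcondincaSz} and grouping by departure site give
\[
 \EE_y a(S_{\sigma_R^{(x)}}-x)\le a(y-x)+\sum_{z\in k_0\ZZ^2,\,|z-x|<R}\beta_z\,d(z-x),
\]
where $\beta_z\in[0,K]$ is the excitation consumed at $z$ during the segment. Negative values of $d$ may be dropped, so the right-hand side is at most $a(y-x)+K\sum_{z\in k_0\ZZ^2,\,|z-x|<R}d_+(z-x)$.

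\textbf{Key estimate.} We then show
\[
 \sum_{z\in k_0\ZZ^2,\ |z-x|<R}d_+(z-x)\le\frac{2}{\pi k_0^2}\log R+C(k_0,x).
\]
By \eqref{eq:d-asymptotic}, $d_+(w)$ equals the positive part of $(w_2^2-w_1^2)/(\pi|w|^4)$ plus a summable error $O(|w|^{-4})$. That leading function has local Lipschitz constant $C|w|^{-3}$. Summing it over the shifted lattice $k_0\ZZ^2-x$ is therefore a Riemann sum with cell area $k_0^2$, so it equals $k_0^{-2}$ times the integral plus an error of order $\sum_w k_0|w|^{-3}<\infty$. The integral over the annulus $1\le|w|\le R$ is $\frac{2}{\pi}\log R+O(1)$, exactly as in \eqref{eq:disk-rows}. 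The finitely many lattice points near $x$ are absorbed into the constant.

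\textbf{Conclusion.} On exit from the ball, $a\ge\frac{2}{\pi}\log R-C$, and $a=0$ at $x$. Hence
\[
\PP_y(\text{exit before hitting }x)\le\frac{K}{k_0^2}+\frac{a(y-x)+C}{(2/\pi)\log R}.
\]
Letting $R\to\infty$ shows that after any history the conditional probability of never hitting $x$ is at most $K/k_0^2<1$. The only history-dependent point is that previously consumed cookies at $z$ merely reduce the remaining budget, which still lies in $[0,K]$, so no $K_{\rm hist}$ term is needed. Finally, the Lévy upward argument of Proposition~\ref{prop:bounded-prefix} gives $\PP(A_N^{(x)})=0$ for all $x,N$, so every vertex is visited infinitely often.

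\textbf{Main obstacle.} The delicate step is the sublattice Riemann-sum estimate, which has to be uniform in the offset $x \bmod k_0$. The reason it should hold is that the positive part of the leading term has angular integral $2$ regardless of how the lattice is shifted. The Lipschitz error, however, must be controlled near the zero set of $\cos 2\theta$, where the positive-part cutoff creates a kink. I would handle this as in Lemma~\ref{lem:anisotropic}: positive parts of Lipschitz functions are Lipschitz with the same constant $C|w|^{-3}$ on cells of side $k_0$ at distance $|w|\gg k_0$. The cell errors then sum to $O(k_0^2)$, which is finite and independent of $R$.
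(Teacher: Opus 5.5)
Your proposal is correct and follows the paper's proof essentially step for step. It uses the same drift identity \eqref{EcondincaSz} with the nonnegative consumed excitation bounded by $K d_+$, the same $k_0\times k_0$ cell-grouping Lipschitz estimate giving $\frac{2}{\pi k_0^2}\log R+O(1)$, the same observation that deleting used nonnegative cookies preserves the hypotheses, and the same L\'evy upward conclusion; the only cosmetic difference is that you compare the sublattice sum directly with the polar integral, whereas the paper compares it with $k_0^{-2}$ times the full-lattice sum and then invokes \eqref{eq:disk-rows}.
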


\begin{proof}
Grouping lattice points into $k_0\times k_0$ cells gives, for all large $R$, uniformly in $x\in\ZZ^2$, 
\[
 \sum_{\substack{z\in k_0\ZZ^2\\|z-x|<R}}d_+(z-x)
 =\frac{2}{\pi k_0^2}\log R+O_{k_0}(1).
\]
Indeed, each complete $k_0\times k_0$ cell contains exactly one
point of $k_0\ZZ^2$ and $k_0^2$ points of $\ZZ^2$. The positive part
of the leading term in \eqref{eq:d-asymptotic} is locally Lipschitz with bound $C|z|^{-3}$  (as in \eqref{eq:disk-rows}'s proof), hence varies by
$O_{k_0}(r^{-3})$ on a cell whose distance from $x$ is $r$, so its value at the $k_0\ZZ^2$ point differs by $O_{k_0}(r^{-3})$ from its average over
the $k_0^2$ lattice points of the cell. Summing over cells thus
replaces $\sum_{z\in k_0\ZZ^2}d_+(z-x)$ by
$\frac1{k_0^2}\sum_{z\in\ZZ^2,\,|z-x|<R}d_+(z-x)$ up to summable
errors, as is the remainder in \eqref{eq:d-asymptotic}.
The $O_{k_0}(R)$ cells
meeting the boundary contribute $O_{k_0}(R^{-1})$ in total, and
the finitely many cells near $x$ contribute a bounded amount.
Comparison with \eqref{eq:disk-rows} proves the claim. 
Applying the drift identity of the preceding proof with
$0\le C_z(\cdot)\le K$ and support in $k_0\ZZ^2$, the spatial cookie bound gives
\[
 \EE_y a(S_{\sigma_R^{(x)}}-x)
 \le a(y-x)+K\sum_{\substack{z\in k_0\ZZ^2\\|z-x|<R}}d_+(z-x)
 \le a(y-x)+\frac{2K}{\pi k_0^2}\log R+C,
\]
where $\sigma_R^{(x)}$ is the first time when $S$ visits $x$ or exit from $B(x,R)$ as in the proof of Proposition \ref{prop:bounded-prefix}. Deleting used cookies preserves these bounds as all cookies are nonnegative. Thus, after every finite history, the probability of never hitting $x$ is at most
$K/k_0^2<1$. The same conditional-probability argument proves the
assertion.
\end{proof}

In particular, one may place three strength-one cookies at every
vertex of $2\ZZ^2$, and no cookies elsewhere.

\section{Acknowledgments}

The author would like to thank Yuval Peres and Alejandro Ramírez for helpful comments on an earlier version of the manuscript. The author used GPT-5.6 to assist with the calculations in Lemma \ref{lem:loglog}. These calculations were independently verified by the author.
 The author is supported by the China Postdoctoral Science Foundation under Grant Numbers 2025M773086 and 2026T190815, and National Natural Science Foundation of China under Grant Number 12271284. 

\bibliographystyle{plain}
\bibliography{M212}

@inproceedings {MR400380,
    AUTHOR = {Burkholder, D. L. and Davis, B. J. and Gundy, R. F.},
     TITLE = {Integral inequalities for convex functions of operators on
              martingales},
 BOOKTITLE = {Proceedings of the {S}ixth {B}erkeley {S}ymposium on
              {M}athematical {S}tatistics and {P}robability ({U}niv.
              {C}alifornia, {B}erkeley, {C}alif., 1970/1971), {V}ol. {II}:
              {P}robability theory},
     PAGES = {223--240},
 PUBLISHER = {Univ. California Press, Berkeley, CA},
      YEAR = {1972},
   MRCLASS = {60G15},
  MRNUMBER = {400380},
MRREVIEWER = {Maurizio\ Pratelli},
}

@article {MR3097419,
    AUTHOR = {Kosygina, Elena and Zerner, Martin P. W.},
     TITLE = {Excited random walks: results, methods, open problems},
   JOURNAL = {Bull. Inst. Math. Acad. Sin. (N.S.)},
  FJOURNAL = {Bulletin of the Institute of Mathematics. Academia Sinica. New
              Series},
    VOLUME = {8},
      YEAR = {2013},
    NUMBER = {1},
     PAGES = {105--157},
      ISSN = {2304-7909,2304-7895},
   MRCLASS = {60K35 (60J80 60K37)},
  MRNUMBER = {3097419},
MRREVIEWER = {Andrew\ R.\ Wade},
}

@article {MR4575009,
    AUTHOR = {Chan, Swee Hong},
     TITLE = {Recurrence of horizontal-vertical walks},
   JOURNAL = {Ann. Inst. Henri Poincar\'e{} Probab. Stat.},
  FJOURNAL = {Annales de l'Institut Henri Poincar\'e{} Probabilit\'es et
              Statistiques},
    VOLUME = {59},
      YEAR = {2023},
    NUMBER = {2},
     PAGES = {578--605},
      ISSN = {0246-0203,1778-7017},
   MRCLASS = {60K35 (60F20 60J10 82C41)},
  MRNUMBER = {4575009},
       DOI = {10.1214/22-aihp1277},
       URL = {https://doi.org/10.1214/22-aihp1277},
}

@inproceedings {MR0047272,
    AUTHOR = {Dvoretzky, A. and Erd\"{o}s, P.},
     TITLE = {Some problems on random walk in space},
 BOOKTITLE = {Proceedings of the {S}econd {B}erkeley {S}ymposium on {M}athematical {S}tatistics and {P}robability},
     PAGES = {353--367},
 PUBLISHER = {Univ. California Press, Berkeley-Los Angeles, Calif.},
      YEAR = {1951},
   MRCLASS = {60.0X},
  MRNUMBER = {47272},
MRREVIEWER = {S.\ Kakutani},
}

@incollection {MR4237270,
    AUTHOR = {Camarena, Daniel and Panizo, Gonzalo and Ram\'{\i}rez,
              Alejandro F.},
     TITLE = {An overview of the balanced excited random walk},
 BOOKTITLE = {In and out of equilibrium 3. {C}elebrating {V}ladas
              {S}idoravicius},
    SERIES = {Progress in Probability},
    VOLUME = {77},
     PAGES = {207--217},
 PUBLISHER = {Birkh\"{a}user, Cham},
      YEAR = {2021},
      ISBN = {978-3-030-60754-8; 978-3-030-60753-1},
   MRCLASS = {60G50 (60G42 82C41)},
  MRNUMBER = {4237270},
       DOI = {10.1007/978-3-030-60754-8\_10},
       URL = {https://doi.org/10.1007/978-3-030-60754-8_10},
}

@book {MR5088005,
    AUTHOR = {Roch, S\'ebastien},
     TITLE = {Modern discrete probability---an essential toolkit},
    SERIES = {Cambridge Series in Statistical and Probabilistic Mathematics},
    VOLUME = {55},
 PUBLISHER = {Cambridge University Press, Cambridge},
      YEAR = {2024},
     PAGES = {xvi+434},
      ISBN = {978-1-00-930511-2; [9781009305129]},
   MRCLASS = {60-01 (05Cxx)},
  MRNUMBER = {5088005},
}

@article {MR1987097,
    AUTHOR = {Benjamini, Itai and Wilson, David B.},
     TITLE = {Excited random walk},
   JOURNAL = {Electron. Comm. Probab.},
  FJOURNAL = {Electronic Communications in Probability},
    VOLUME = {8},
      YEAR = {2003},
     PAGES = {86--92},
      ISSN = {1083-589X},
   MRCLASS = {60G50 (60K37)},
  MRNUMBER = {1987097},
MRREVIEWER = {Michael\ Voit},
       DOI = {10.1214/ECP.v8-1072},
       URL = {https://doi.org/10.1214/ECP.v8-1072},
}

@Article{MR2788390,
  author     = {Benjamini, Ita\i{} and Kozma, Gady and Schapira, Bruno},
  journal    = {C. R. Math. Acad. Sci. Paris},
  title      = {A balanced excited random walk},
  year       = {2011},
  issn       = {1631-073X,1778-3569},
  number     = {7-8},
  pages      = {459--462},
  volume     = {349},
  doi        = {10.1016/j.crma.2011.02.018},
  fjournal   = {Comptes Rendus Math\'{e}matique. Acad\'{e}mie des Sciences.
              Paris},
  mrclass    = {60G50},
  mrnumber   = {2788390},
  mrreviewer = {David\ J.\ Grabiner},
  url        = {https://doi.org/10.1016/j.crma.2011.02.018},
}

@article {MR2041826,
    AUTHOR = {Kozma, Gady and Schreiber, Ehud},
     TITLE = {An asymptotic expansion for the discrete harmonic potential},
   JOURNAL = {Electron. J. Probab.},
  FJOURNAL = {Electronic Journal of Probability},
    VOLUME = {9},
      YEAR = {2004},
     PAGES = {no. 1, 1--17},
      ISSN = {1083-6489},
   MRCLASS = {60J45 (31C20 35C20 60G50)},
  MRNUMBER = {2041826},
MRREVIEWER = {Klaus\ G\"urlebeck},
       DOI = {10.1214/EJP.v9-170},
       URL = {https://doi.org/10.1214/EJP.v9-170},
}

@book {MR2677157,
    AUTHOR = {Lawler, Gregory F. and Limic, Vlada},
     TITLE = {Random walk: a modern introduction},
    SERIES = {Cambridge Studies in Advanced Mathematics},
    VOLUME = {123},
 PUBLISHER = {Cambridge University Press, Cambridge},
      YEAR = {2010},
     PAGES = {xii+364},
      ISBN = {978-0-521-51918-2},
   MRCLASS = {60G50 (60-02)},
  MRNUMBER = {2677157},
MRREVIEWER = {Andrew\ R.\ Wade},
       DOI = {10.1017/CBO9780511750854},
       URL = {https://doi.org/10.1017/CBO9780511750854},
}

@article {MR3531697,
    AUTHOR = {Peres, Yuval and Schapira, Bruno and Sousi, Perla},
     TITLE = {Martingale defocusing and transience of a self-interacting
              random walk},
   JOURNAL = {Ann. Inst. Henri Poincar\'{e} Probab. Stat.},
  FJOURNAL = {Annales de l'Institut Henri Poincar\'{e} Probabilit\'{e}s et
              Statistiques},
    VOLUME = {52},
      YEAR = {2016},
    NUMBER = {3},
     PAGES = {1009--1022},
      ISSN = {0246-0203,1778-7017},
   MRCLASS = {60K35},
  MRNUMBER = {3531697},
MRREVIEWER = {Christian\ Hirsch},
       DOI = {10.1214/14-AIHP667},
       URL = {https://doi.org/10.1214/14-AIHP667},
}

@Article{MR4597323,
  author     = {Angel, Omer and Holmes, Mark and Ramirez, Alejandro},
  journal    = {Ann. Probab.},
  title      = {Balanced excited random walk in two dimensions},
  year       = {2023},
  issn       = {0091-1798,2168-894X},
  number     = {4},
  pages      = {1421--1448},
  volume     = {51},
  doi        = {10.1214/23-aop1622},
  fjournal   = {The Annals of Probability},
  mrclass    = {60K35 (60G42)},
  mrnumber   = {4597323},
  url        = {https://doi.org/10.1214/23-aop1622},
}

\end{document}